\documentclass[a4paper,11pt]{amsart}


\usepackage[
  margin=30mm,
  marginparwidth=25mm,     
  marginparsep=2mm,       
  bottom=25mm,
  ]{geometry}

\usepackage[latin1]{inputenc}
\usepackage[T1]{fontenc}
\usepackage[english]{babel}
\usepackage{amsmath,amssymb,amsthm}
\usepackage{latexsym}
\usepackage{delarray}
\usepackage{bbm}
\usepackage{hyperref}
\usepackage[pdftex,usenames,dvipsnames]{color}
\usepackage{bbding,trfsigns}
\usepackage{datetime}
\usepackage{mathrsfs}
\usepackage{enumitem}
\usepackage{tikz}
\usepackage{gensymb}

\newtheorem{Th}{Theorem}[section]
\newtheorem{Prop}[Th]{Proposition}
\newtheorem{Lem}[Th]{Lemma}
\newtheorem{Cor}[Th]{Corollary}

\newcommand{\C}{\mathbb{C}}

\newcommand{\N}{\mathbb{N}}
\newcommand{\R}{\mathbb{R}}

\newcommand{\Rd}{\mathbb{R}^{d}}
\newcommand{\Rdd}{\mathbb{R}^{d+1}_+}
\newcommand{\Z}{\mathbb{Z}}

\newcommand{\G}{\Gamma}

\newcommand{\Aa}{\mathcal{A}}

\newcommand{\HH}{\mathcal{H}}
\newcommand{\Hpq}{\mathcal{H}^{p,q}(\Rd)}
\newcommand{\HHpq}{\mathbb{H}^{p,q}(\Rdd)}
\newcommand{\HHpqt}{\mathbb{H}^{p,q}_{\Theta}(\Rdd)}
\newcommand{\Hpqt}{\mathcal{H}^{p,q}_{\Theta}(\Rdd)}

\newcommand{\LL}{\mathcal{L}}
\newcommand{\MM}{\mathcal{M}}
\newcommand{\OO}{\mathcal{O}}

\newcommand{\Ss}{\mathcal{S}}

\newcommand{\RR}{\mathcal{R}}

\newcommand{\1}{\mathbbm{1}}

\newcommand{\eps}{\varepsilon}

\DeclareMathOperator{\supp}{supp}
\DeclareMathOperator{\rank}{rank}

\allowdisplaybreaks 

\title
[Riesz transforms, Cauchy-Riemann systems and amalgam Hardy spaces]
{Riesz transforms, Cauchy-Riemann systems and amalgam Hardy spaces}

\author[A. Assaubay]{A. Assaubay}
\author[J. J. Betancor]{J. J. Betancor}
\author[A. J. Castro]{A. J. Castro}
\author[J.C. Fari\~na]{J.C. Fari\~na}

\address{Jorge J. Betancor, Juan C. Fari\~na\newline
	Departamento de An\'alisis Matem\'atico, Universidad de La Laguna,\newline
	Campus de Anchieta, Avda. Astrof\'isico S\'anchez, s/n,\newline
	38721 La Laguna (Sta. Cruz de Tenerife), Spain}
\email{jbetanco@ull.es, jcfarina@ull.es}

\address{Al-Tarazi Assaubay, Alejandro J. Castro \newline
       Department of  Mathematics, Nazarbayev University, \newline
		010000 Astana, Kazakhstan}
\email{tarazi.assaubay@nu.edu.kz, alejandro.castilla@nu.edu.kz}

\address{\newline}

\thanks{The second and the fourth authors were partially supported by MTM2016-79436-P. The third author was supported by Nazarbayev University Social Policy Grant.}

\keywords{Hardy spaces, amalgam spaces, Cauchy-Riemann equations, Riesz transforms.}

 \subjclass[2010]{42B30, 42B35}

\begin{document}



\begin{abstract}
In this paper we study Hardy spaces $\Hpq$, $0<p,q<\infty$, modeled over amalgam spaces $(L^p,\ell^q)(\Rd)$.
We characterize $\Hpq$ by using first order classical Riesz transforms and compositions of first order Riesz transforms depending on the values of the exponents $p$ and $q$. Also, we describe the distributions in $\Hpq$ as the boundary values of solutions of harmonic and caloric Cauchy-Riemann systems. We remark that caloric Cauchy-Riemann systems involve fractional derivative in the time variable.
Finally we characterize the functions in $L^2(\Rd) \cap \Hpq$ by means of Fourier multipliers $m_\theta$ with symbol $\theta(\cdot/|\cdot|)$, where $\theta \in C^\infty(\mathbb{S}^{d-1})$ and $\mathbb{S}^{d-1}$ denotes the unit sphere in $\Rd$.
\end{abstract}

\maketitle

\section{Introduction}

In this paper we study Hardy spaces $\Hpq$, $0<p,q<\infty$, modeled over amalgam spaces $(L^p,\ell^q)(\Rd)$. These spaces have been investigated recently by Abl\'e and Feuto (\cite{AF1}, \cite{AF2} and \cite{AF3}; see also \cite{ZYYW}).\\

Amalgam spaces were first defined by Wiener in 1926. He considered the particular spaces $(L^1,\ell^2)(\R)$ and $(L^2,\ell^\infty)(\R)$ in \cite{W1}; $(L^1,\ell^\infty)(\R)$ and $(L^\infty,\ell^1)(\Rd)$ in \cite{W2}. For every $0<p,q<\infty$ the amalgam space $(L^p,\ell^q)(\R)$ consists of all those $f \in L^p_{loc}(\R)$ such that
\begin{equation*}\label{eq:I1}
\|f\|_{p,q}
:= \Big( \sum_{n \in \Z} \Big( \int_{n}^{n+1} |f(x)|^p \, dx \Big)^{q/p} \Big)^{1/q}<\infty.
\end{equation*}
When $p$ or $q$ is infinity, the usual adjustments should be done.
As it can be observed in the definition of $\|\cdot\|_{p,q}$ it contains information about the local-$L^p$ and the global-$\ell^q$ properties of the functions, in contrast with the standard Lebesgue $L^p$-spaces. Notice also that $L^p(\R)=(L^p,\ell^p)(\R)$, $0<p<\infty$.\\

The first systematic study of the space $(L^p,\ell^q)(\R)$ was undertaken in 1975 by Holland (\cite{Hol}). Feichtinger (\cite{Fei}) generalized the definition of amalgams to Banach spaces. Amalgams have appeared in various areas of analysis: Tauberian theorems, almost periodic functions, Fourier multipliers, domain and range of Fourier transform, algebras and modules \dots A complete survey on amalgam spaces was written by Fournier and Stewart (\cite{FoSt}).\\

For every $d \in \N$, $d \geq 2$, and $0<p,q<\infty$ we say that a function $f \in L^p_{loc}(\Rd)$ is in the amalgam space $(L^p,\ell^q)(\Rd)$ when
$$\|f\|_{p,q}
:=\Big\| \|\1_{B(y,1)}f\|_p \Big\|_q < \infty.$$
Here $\1_E$ represents, as usual, the characteristic function over the set $E$.
We can also write a "discrete" equivalent expression for $\| \cdot \|_{p,q}$ (\cite[eq. (2.1)]{AF1}).\\

Following the ideas of Stein and Weiss (\cite{StWe2}) and Fefferman and Stein (\cite{FS}) several generalizations of Hardy spaces $\HH^p(\Rd)$ have been considered. One possible way of defining new Hardy spaces is to replace the Lebesgue space $L^p(\Rd)$ by other function space $X(\Rd)$ on $\Rd$. When this is made we say that a Hardy space modeled over $X(\Rd)$ have been defined. In the last years, Hardy spaces modeled over several spaces (Orlicz, Lorentz, Musielak-Orlicz, Lebesgue spaces with variable exponents, \dots) have been considered (see, for instance, \cite{AT},
\cite{CW},
\cite{LYJ},
\cite{NS},
\cite{SHYY} and
\cite{YLK}).\\

Hardy spaces modeled on amalgams were defined by Abl\'e and Feuto (\cite{AF1}) as follows. Let $0<p,q<\infty$. Assume that $\varphi \in S(\Rd)$ with $\int_{\Rd} \varphi(x) \, dx \neq 0$. A tempered distribution $f$ on $\Rd$ is said to be in the Hardy space $\Hpq$ when the maximal function $M_\varphi(f)$ of $f$ defined by
$$M_\varphi(f)(x)
:=\sup_{t>0} |(f * \varphi_t)(x)|, \quad x \in \Rd,$$
is in $(L^p,\ell^q)(\Rd)$. Here, $\varphi_t(x):=t^{-n}\varphi(x/t)$. We consider, for every $f\in \Hpq$,
$$
\|f\|_{\Hpq}:=\|M_\varphi(f)\|_{p,q}.
$$
$\Hpq$ is a quasi--Banach space when it is endowed with the quasi-norm $\|.\|_{\Hpq}$. In particular, $\Hpq=(L^p,\ell^q)(\mathbb{R}^d)$, provided that $1<p,q<\infty$ (\cite[Theorem 3.2.1]{AF1}).\\

In \cite[Theorem 3.7]{AF1} $\Hpq$ is characterized by using nontangential and grand maximal functions and it is proved that $\Hpq$ is actually independent of the choice of the function $\varphi$ use to define the maximal function.\\

The distributions in $\Hpq$ are bounded, that is, $f * \varphi \in L^\infty(\Rd)$, for every $\varphi \in S(\Rd)$ (\cite[(3.12) and (3.13)]{AF1}). This property allows us to define, for every $f \in \Hpq$, the convolution $f * P_t$ of $f$ with the Poisson kernel
$$P(x)
:= \frac{\G(\frac{d+1}{2})}{\pi^{(d+1)/2}} \frac{1}{(1+|x|^2)^{(d+1)/2}}, \quad x \in \Rd,$$
according to \cite[p. 89--90]{St1}. If $f \in S(\Rd)'$ is bounded, the function
$$u(x,t):=(f*P_t)(x), \quad x \in \Rd, \, t>0,$$
is harmonic and $f \in \Hpq$ if, and only if, the maximal function
$$u_+(x):=\sup_{t>0}|u(x,t)|, \quad x \in \Rd,$$
is in $(L^p,\ell^q)(\Rd)$ (\cite[p. 16]{AF1}).\\

Atomic characterizations of the distributions in $\Hpq$ were established in \cite[Theorems 4.3, 4.4 and 4.6]{AF1}.
Dual spaces of $\Hpq$ and the boundedness of certain pseudo-differential operators, Calder\'on-Zygmund operators and Riesz potentials in these Hardy spaces were studied in \cite{AF2} and \cite{AF3}.\\

Recently, Sawano, Ho, D. Yang and S. Yang (\cite{SHYY}) have developed a real variable theory of Hardy spaces modeled over a general class of function spaces called ball quasi-Banach functions spaces. A quasi-Banach space $X$ of measurable functions on $\Rd$ is a ball quasi-Banach function space on $\Rd$ when the following properties hold:
\begin{itemize}
\item[$i)$] $\|f\|_X=0$ implies that $f=0$;
\item[$ii)$] If $|g|\leq |f|$ and $f,g \in X$, then $\|g\|_X \leq \|f\|_X$;
\item[$iii)$] If $\{f_m\}_{m=1}^\infty$ is an increasing sequence in $X$, $f \in X$ and
$\displaystyle \lim_{m \to \infty} f_m(x)=f(x)$, a.e. $x \in \Rd$, then $\|f_m\|_X \longrightarrow \|f\|_X$, as $m \to \infty$;
\item[$iv)$] For every $x \in \Rd$ and $r>0$, $\1_{B(x,r)} \in X$.
\end{itemize}
According to \cite[Lemma 2.28]{ZYYW} amalgam spaces
$(L^p,\ell^q)(\Rd)$, $0<p,q<\infty$ are quasi-Banach function spaces on $\Rd$. Then, Hardy spaces modeled over amalgams studied in \cite{AF1} can be seen as special cases of the ones considered in \cite{SHYY}. In \cite{ZYYW}, Zhang, Yang, Yuan and Wang have defined Orlicz-slice Hardy spaces on $\Rd$. This new scale of Hardy spaces contains also the Hardy-amalgam spaces $\Hpq$ of Abl\'e and Feuto (\cite{AF1}).\\

In this paper we complete the theory of Hardy spaces over amalgams. We establish for $\Hpq$ some new results that are not contained in previous investigations \cite{SHYY} and \cite{ZYYW}.\\

If $f \in \Hpq$, then, for every $\phi \in S(\Rd)$,
$f * \phi \in (L^{\mu p},\ell^{\mu q})(\Rd)$, for every $\mu \geq 1$ (see Section \ref{Sect:2}). This fact allows us to define Riesz transformations $R_j$, $j=1, \dots, d$, on $\Hpq$ following the ideas developed in
\cite[p. 123]{St1} for the classical Hardy space.
If $0<p,q<\infty$, we say that a distribution $f \in S(\Rd)'$ is $(p,q)$-restricted at infinity when there exists $\mu_0 \geq 1$ such that $f * \phi \in (L^{\mu p},\ell^{\mu q})(\Rd)$ for every $\phi \in S(\Rd)$ and $\mu \geq \mu_0$. In the next result we characterize the distributions in $\Hpq$ by using Riesz transforms.

\begin{Th}\label{Th:1.1} \quad
\begin{itemize}
\item[$i)$] Let $(d-1)/d < min\{p,q\}<\infty$, $f \in S(\Rd)'$ and $\phi \in S(\Rd)$ such that $\int_{\Rd} \phi(x) dx \neq 0$. Then, $f \in \HH^{p,q}(\Rd)$ if, and only if, $f$ is $(p,q)$-restricted at infinity and
$$\sup_{\eps >0} \Big( \|f * \phi_\eps \|_{p,q} + \sum_{j=1}^d \|(R_j f) * \phi_\eps\|_{p,q}\Big) < \infty.$$
Moreover, the quantities
$\|f\|_{\HH^{p,q}(\Rd)}$ and
$$\sup_{\eps >0} \Big( \|f * \phi_\eps \|_{p,q} + \sum_{j=1}^d \|(R_j f) * \phi_\eps\|_{p,q}\Big)$$
are equivalent.

\item[$ii)$] Assume that $m \in \N$, $(d-1)/(d+m-1) < min\{p,q\}<\infty$, $f \in S(\Rd)'$ and $\phi \in S(\Rd)$ such that $\int_{\Rd} \phi(x) dx \neq 0$. Then, $f \in \HH^{p,q}(\Rd)$ if, and only if, $f$ is $(p,q)$-restricted at infinity and
$$\sup_{\eps >0} \Big( \|f * \phi_\eps \|_{p,q} + \sum_{k=1}^m \sum_{j_1, \dots, j_k=1}^d \|R_{j_1} \dots  R_{j_k}(f) * \phi_\eps\|_{p,q}\Big) < \infty.$$
Moreover, the quantities $\|f\|_{\HH^{p,q}(\Rd)}$ and
$$\sup_{\eps >0} \Big( \|f * \phi_\eps \|_{p,q} + \sum_{k=1}^m \sum_{j_1, \dots, j_k=1}^d \|R_{j_1} \dots  R_{j_k}(f) * \phi_\eps\|_{p,q}\Big)$$
are equivalent.
\end{itemize}
\end{Th}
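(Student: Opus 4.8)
The plan is to follow the classical Stein–Weiss / Fefferman–Stein strategy for characterizing Hardy spaces by Riesz transforms, adapting each step to the amalgam setting by using the maximal-function characterization of $\Hpq$ recalled above and the mapping properties of convolutions with Schwartz functions on amalgams quoted in the excerpt. For the "only if" direction in part $i)$, I would start from $f\in\Hpq$, form $u(x,t)=(f*P_t)(x)$ and $v_j(x,t)=((R_jf)*P_t)(x)$, and observe that $(u,v_1,\dots,v_d)$ satisfies the generalized Cauchy–Riemann (Stein–Weiss) system in $\Rdd$. The subharmonicity of $|F|^s$ for $F=(u,v_1,\dots,v_d)$ when $s\ge (d-1)/d$ (this is the numerology forcing the hypothesis $(d-1)/d<\min\{p,q\}$) lets one dominate $|F(x,t)|$ by the Poisson integral of its boundary trace, hence by a nontangential maximal function; combined with $f\in\Hpq$ and the boundedness of $R_j$ on $L^2$ plus a standard density/truncation argument, this yields $\sup_{\eps>0}(\|f*\phi_\eps\|_{p,q}+\sum_j\|(R_jf)*\phi_\eps\|_{p,q})\lesssim\|f\|_{\Hpq}$, together with $(p,q)$-restrictedness.

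For the converse, assume $f$ is $(p,q)$-restricted at infinity and the displayed supremum is finite. The idea is to again build the vector $F=(u,v_1,\dots,v_d)$ of Poisson extensions of $f$ and the $R_jf$; the hypotheses guarantee each component is well defined (here is where $(p,q)$-restrictedness is used, so that $R_jf$ makes sense as a distribution whose Poisson integral is finite) and that the boundary $(L^p,\ell^q)$-norms of $F(\cdot,t)$ are uniformly bounded in $t$. One shows $F$ solves the Cauchy–Riemann system, invokes subharmonicity of $|F|^{(d-1)/d}$ to get that $|F(\cdot,t)|$ is controlled by the Poisson integral of an $(L^{p d/(d-1)},\ell^{q d/(d-1)})$-type majorant, and then runs the standard "harmonic majorant / nontangential maximal function" argument to conclude that the Poisson maximal function $u_+$ of $u$ lies in $(L^p,\ell^q)(\Rd)$. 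By the maximal characterization of $\Hpq$ recalled in the introduction (\cite[p.~16]{AF1}), this gives $f\in\Hpq$ with $\|f\|_{\Hpq}$ controlled by the displayed quantity. Part $ii)$ is handled by iterating: compositions $R_{j_1}\cdots R_{j_k}f$ up to order $m$ correspond to passing to higher-order Cauchy–Riemann systems (tensors of rank $m$), for which the relevant subharmonicity exponent improves to $(d-1)/(d+m-1)$, exactly matching the stated hypothesis; the rest of the argument is the same, with an induction on $m$ reducing the order.

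The main technical obstacle, and the place where genuine work beyond transcribing the classical proof is needed, is transporting the subharmonicity/harmonic-majorant machinery from the $L^p$ setting to the amalgam $(L^p,\ell^q)$ setting. Concretely, one needs an amalgam-valued version of the Hardy–Littlewood maximal inequality (vector-valued or with the two-parameter structure), boundedness of the nontangential maximal operator and of the Poisson semigroup on $(L^p,\ell^q)$ for the relevant ranges, and a lemma saying that if a subharmonic function in $\Rdd$ has boundary values whose "$t$-slices" are uniformly bounded in an amalgam quasi-norm then it admits the expected harmonic majorant with amalgam-controlled norm. These are presumably assembled in Section~\ref{Sect:2}; the delicate point is that amalgam norms do not interact with dilations as simply as $L^p$ norms, so the scaling steps (e.g. replacing $\eps\to0$, or passing between $F(\cdot,t)$ and its trace) must be done carefully, and the change of exponent $p\mapsto pd/(d-1)$ has to be tracked simultaneously in both the local exponent $p$ and the global exponent $q$. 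A secondary subtlety is justifying that $f$ being $(p,q)$-restricted at infinity is exactly the right hypothesis to make $R_jf$ and its Poisson integral meaningful and to allow the limiting arguments; one must check this condition is stable under the operations performed and that it is automatically satisfied when $f\in\Hpq$.
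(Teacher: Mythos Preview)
Your proposal is correct and follows essentially the same route as the paper: the Stein--Weiss/Fefferman--Stein strategy via generalized Cauchy--Riemann systems, subharmonicity of $|F|^{(d-1)/d}$, and a harmonic-majorant/maximal-function argument, adapted to amalgams through the boundedness of the Hardy--Littlewood maximal operator on $(L^{p/\eta},\ell^{q/\eta})$ and the maximal characterization of $\Hpq$. The paper fills in two points you leave implicit: the precise definition of $R_jf$ for a $(p,q)$-restricted distribution (done by splitting the Riesz kernel as $K_j=K_j\1_{B(0,1)}+K_j\1_{B(0,1)^c}$, pairing the first piece with $f$ as a compactly supported distribution and the second as an element of the amalgam dual), and, in the ``if'' direction, a regularization step where one first works with $f_\eps=f*\phi_\eps\in L^\infty\cap(L^{\mu p},\ell^{\mu q})$, builds $F_\eps$ via conjugate Poisson kernels, obtains $\sup_t\||F_\eps(\cdot,t)|\|_{p,q}\le CA$ uniformly in $\eps$, and only then passes to the limit with Fatou before invoking the subharmonic majorant (your Proposition~\ref{Prop:2.3}).
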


In order to prove Theorem \ref{Th:1.1} we follow the strategy developed in \cite[p. 123--124]{St1}. These arguments have been also used to characterize Hardy spaces modeled over other spaces (see, for instance,
\cite{CCYY},
\cite{WZZJ} and
\cite{YZN}). We need to make some modifications in order to adapt the method in \cite[p. 123--124]{St1} to our amalgam setting.\\

In \cite[p. 120]{St1} the Hardy space $\HH^p(\Rd)$ is characterized in terms of systems of conjugate harmonic functions. We say that a vector
$F:=(u_1, \dots, u_{d+1})$ of $d+1$ functions in $\Rdd:=\mathbb{R}^d\times (0,\infty)$ satisfies the generalized Cauchy-Riemann equations, shortly $F \in CR(\Rdd)$, when
$$\frac{\partial u_j}{\partial x_k} = \frac{\partial u_k}{\partial x_j}, \quad 1 \leq j,k \leq d+1,
\qquad \text{and} \qquad \sum_{j=1}^{d+1} \frac{\partial u_j}{\partial x_j} = 0.$$
Here and in the sequel we identify $x_{d+1}=t$.\\

In order to prove Theorem \ref{Th:1.1} we previously need to establish the following characterization of $\Hpq$.

\begin{Th}\label{Th:1.2}
Suppose that $u$ is a harmonic function in $\Rdd$.
\begin{itemize}
\item[$i)$] There exists $f \in \Hpq$ such that
$u(x,t)=P_t(f)(x)$, $x \in \Rd$ and $t>0$.

\item[$ii)$] $u^* \in (L^p,\ell^q)(\Rd)$, where
$$ u^*(x)
:= \sup_{|x-y|<t, \ t>0} |u(y,t)|, \quad x \in \Rd.$$

\item[$iii)$] There exists a harmonic vector
$F:=(u_1, \dots, u_{d+1}) \in CR(\Rdd)$ such that $u_{d+1}:=u$ and
$\displaystyle \sup_{t>0} \||F(\cdot,t)|\|_{p,q}<\infty.$
\end{itemize}
Moreover
\begin{itemize}
\item if $0<p<q<\infty$, $i) \Leftrightarrow ii)$ and
$\|f\|_{\Hpq}$ and $\|u^*\|_{p,q}$ are equivalent; and
\item when $(d-1)/d < min\{p,q\}<\infty$,
$i) \Leftrightarrow ii) \Leftrightarrow iii)$ with equivalent quantities $\|f\|_{\Hpq}$, $\|u^*\|_{p,q}$ and
$\displaystyle \sup_{t>0} \||F(\cdot,t)|\|_{p,q}.$
\end{itemize}
\end{Th}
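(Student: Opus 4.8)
The plan is to follow the classical scheme of Stein--Weiss and Fefferman--Stein (as presented in \cite[Ch. III]{St1}) for passing between harmonic majorization, nontangential maximal functions, and conjugate harmonic systems, but carried out in the amalgam quasi-norm $\|\cdot\|_{p,q}$ rather than the Lebesgue norm.

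\textbf{Step 1: $i)\Rightarrow ii)$.} If $u(x,t)=P_t(f)(x)$ with $f\in\Hpq$, then the nontangential maximal function $u^*$ is pointwise comparable to the grand/nontangential maximal functions already used to characterize $\Hpq$ in \cite[Theorem 3.7]{AF1}; since $\Hpq$ is independent of the defining function $\varphi$, one gets $\|u^*\|_{p,q}\lesssim\|f\|_{\Hpq}$ directly.

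\textbf{Step 2: $ii)\Rightarrow i)$ (valid for all $0<p,q<\infty$).} Here I would argue that $u^*\in(L^p,\ell^q)(\Rd)$ forces $u$ to have a boundary distribution $f$. The standard device is: on each half-space $\{t>\delta\}$, $u$ is the Poisson integral of $u(\cdot,\delta)$; one shows $\{u(\cdot,\delta)\}_{\delta>0}$ is bounded in $(L^p,\ell^q)(\Rd)$ (controlled by $u^*$) and converges in $S(\Rd)'$ as $\delta\to0^+$ to some $f$, using the local $L^p$ control from $u^*$ to get tightness, plus the harmonicity of $u$ to upgrade local-in-time convergence to identification $u=P_t(f)$. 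Then $M_\varphi(f)\lesssim u^*$ (choosing $\varphi$ a suitable fixed bump, again using $\varphi$-independence), so $f\in\Hpq$ with $\|f\|_{\Hpq}\lesssim\|u^*\|_{p,q}$. Note this direction does not need the restriction $(d-1)/d<\min\{p,q\}$, which is why the first bullet is stated for all $0<p<q<\infty$ (here $p<q$ is presumably used to invoke an amalgam-specific embedding/Fefferman--Stein vector-valued inequality).

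\textbf{Step 3: $iii)\Rightarrow ii)$.} Given a harmonic vector $F\in CR(\Rdd)$ with $\sup_{t>0}\||F(\cdot,t)|\|_{p,q}<\infty$, the key analytic input is the classical subharmonicity estimate: $|F|^{s}$ is subharmonic in $\Rdd$ for $s\ge(d-1)/d$ (Stein--Weiss). Pick $s$ with $(d-1)/d\le s<\min\{p,q\}$. Then $|F(x,t)|^s\le (P_t*g)(x)$ where $g$ is the boundary data of $|F|^s$ (obtained as a weak-$*$ limit of $|F(\cdot,\delta)|^s$, which is bounded in $(L^{p/s},\ell^{q/s})(\Rd)$). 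Hence the nontangential maximal function of $|F|$ is dominated by $\big(\MM g\big)^{1/s}$ where $\MM$ is Hardy--Littlewood, and the Hardy--Littlewood maximal operator is bounded on $(L^{p/s},\ell^{q/s})(\Rd)$ since $p/s,q/s>1$ --- this boundedness on amalgams is classical (Carton-Lebrun--Heinig--Hofmann, or \cite{AF1}). This yields $\|u^*\|_{p,q}\le\||F|^*\|_{p,q}\lesssim\sup_{t>0}\||F(\cdot,t)|\|_{p,q}$. The threshold $(d-1)/d<\min\{p,q\}$ enters precisely to make room for such an $s$.

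\textbf{Step 4: $i)\Rightarrow iii)$.} Given $f\in\Hpq$ with $u=P_t(f)$, set $u_{d+1}=u$ and $u_j=Q_t^{(j)}(f)$, the conjugate Poisson integrals, i.e. $u_j(x,t)=(R_jf*P_t)(x)$ formally --- more precisely one uses the conjugate Poisson kernels $Q^{(j)}_t$. One must check (a) $F=(u_1,\dots,u_{d+1})$ is a harmonic vector satisfying the generalized Cauchy--Riemann equations (a standard computation with the Poisson and conjugate Poisson kernels), and (b) $\sup_{t>0}\||F(\cdot,t)|\|_{p,q}<\infty$. For (b), $\||F(\cdot,t)|\|_{p,q}\le \|u(\cdot,t)\|_{p,q}+\sum_j\|u_j(\cdot,t)\|_{p,q}$; the term $\|u(\cdot,t)\|_{p,q}\le\|u^*\|_{p,q}<\infty$, and for the conjugate terms one needs $\sup_{t>0}\|R_jf*P_t\|_{p,q}\lesssim\|f\|_{\Hpq}$. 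This last bound is exactly the content of the Riesz-transform boundedness on $\Hpq$ --- but since Theorem \ref{Th:1.1} is to be proved \emph{using} this theorem, one must not invoke it; instead I would prove directly that $R_jf*P_t=f*(\text{conjugate Poisson kernel})_t$ and estimate it via the subharmonicity of $|F|^s$ once one knows $|F(\cdot,t)|$ is locally bounded --- circularity is avoided because here we only need a \emph{qualitative} boundedness with constant depending on $\|u^*\|_{p,q}$, obtained from the fact that each $u_j$ is harmonic and $|F|^s$ is subharmonic with boundary data controlled by iterating Step 2 componentwise. Alternatively, and more cleanly, one establishes the a priori bound for $f\in L^2\cap\Hpq$ (where $R_j$ is unambiguous and $L^2$-bounded) and then passes to general $f$ by the density/atomic decomposition of \cite[Theorems 4.3--4.6]{AF1} together with the lower semicontinuity of $\|\cdot\|_{p,q}$.

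\textbf{Main obstacle.} The delicate point is the interplay in Step 4 between defining the conjugate system for a general distribution $f\in\Hpq$ (which a priori is not in any $L^r$) and obtaining the uniform-in-$t$ amalgam bound \emph{without} circularly invoking Theorem \ref{Th:1.1}. The resolution is to exploit that the distributions in $\Hpq$ are $(p,q)$-restricted at infinity (so $f*\phi\in(L^{\mu p},\ell^{\mu q})$ for large $\mu$, as noted in the excerpt and Section \ref{Sect:2}), which legitimizes the convolutions $R_jf*P_t$, combined with the subharmonicity of $|F|^{(d-1)/d}$ to transfer control from the boundary values $u^*$ to $\sup_t\||F(\cdot,t)|\|_{p,q}$. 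Matching the quasi-norm constants across these passages, and verifying that the Hardy--Littlewood maximal operator and the weak-$*$ compactness arguments behave well in the amalgam scale (in particular at the endpoint $p$ or $q$ close to $(d-1)/d$), will require the careful bookkeeping that distinguishes this argument from the classical $L^p$ one.
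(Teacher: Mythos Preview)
Your proposal is essentially correct and follows the same classical Stein--Weiss/Fefferman--Stein scheme that the paper uses (the paper packages Steps~1--2 as Proposition~2.1 and Steps~3--4 as Proposition~2.3, with the subharmonicity/Poisson-majorization step isolated as a lemma). Two small corrections are worth making. First, your speculation in Step~2 about the role of the condition $p<q$ is off: the paper's Proposition~2.1 establishes $i)\Leftrightarrow ii)$ for \emph{all} $0<p,q<\infty$ (the H\"older-on-amalgams argument splits into the cases $q\le p$ and $p\le q$, both handled), so the ``$0<p<q<\infty$'' in the statement is almost certainly a typo for $0<p,q<\infty$ and no amalgam-specific embedding is invoked. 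Second, for Step~4 the paper sidesteps the circularity you worry about not via your first subharmonicity-bootstrapping idea (which, as you sense, is awkward) but exactly via your ``alternatively, more cleanly'' route: it uses the density of $L^r\cap\Hpq$ in $\Hpq$ and an \emph{independent} proof that each $\mathcal R_j$ is bounded on $\Hpq$, obtained from the Littlewood--Paley/area-integral characterization in \cite[Theorem~3.17]{ZYYW} (or from \cite[Corollary~4.19]{AF2}), rather than from the atomic decomposition directly. With those two adjustments your outline matches the paper's proof.
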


Kochneff and Sagher (\cite{KS}) considered temperature Cauchy-Riemann equations in $\R^2_+$. Following the idea of conjugacy in \cite{KS} (see also \cite{G}) Guzm\'an-Partida and P\'erez-Esteva (\cite{GP}) defined temperature Cauchy-Riemann equations in $\Rdd$. A vector $F=(u_1, \dots, u_{d+1})$ of $d+1$ functions in $\R^{d+1}$ is said to satisfy the generalized temperature Cauchy-Riemann equations, shortly $F \in TCR(\Rdd)$, when
\begin{itemize}
\item[$a)$] $\displaystyle \sum_{j=1}^{d} \frac{\partial u_j}{\partial x_j} = i \, \partial_t^{1/2} u_{d+1} \quad \text{in} \quad \Rdd;$ \\

\item[$b)$] $\displaystyle \frac{\partial u_j}{\partial x_k} = \frac{\partial u_k}{\partial x_j}, \quad j=1, \dots, d, \quad \text{in} \quad \Rdd;$\\

\item[$c)$]  $\displaystyle \frac{\partial u_{d+1}}{\partial x_j} = -i \, \partial_t^{1/2} u_{j}, \quad j=1, \dots, d, \quad \text{in} \quad \Rdd.$
\end{itemize}
Here, $\partial_t^{1/2}$ is the Weyl's fractional derivative operator of order $1/2$ with respect to $t$. If $g$ is a smooth enough function on $(0,\infty)$, $\partial_t^{1/2}g$ is defined by
\begin{equation*}\label{eq:Weyl}
(\partial_t^{1/2}g)(t)
:= \frac{e^{i\pi/2}}{\sqrt{\pi}} \int_t^\infty \frac{g'(s)}{\sqrt{s-t}} \, ds, \quad t>0.
\end{equation*}

In this paper we study solutions of the generalized temperature Cauchy-Riemann equations in amalgam spaces.\\

If $0<p,q<\infty$ we say that the vector $F:=(u_1, \dots, u_{d+1})$ of functions in $\Rdd$ is in $\HHpq$ when $F \in TCR(\Rdd)$ and
$$\|F\|_{\HHpq}
:=\sup_{t>0} \Big\| |F(\cdot,t)| \Big\|_{p,q}<\infty.$$

By using Riesz transforms we prove that the spaces $\Hpq$ and $\HHpq$ are topologically isomorphic. \\

We define the heat kernel
$$W_t(x)
:=\frac{e^{-|x|^2/4t}}{(4 \pi t)^{d/2}}, \quad x \in \Rd, \, t>0.$$

\begin{Th}\label{Th:1.3}
Let $(d-1)/d < min\{p,q\}$. We define, for every
$f \in \Hpq$,
$$\LL(f)(x,t)
:= \Big(
(R_1f*W_t)(x)
, \dots,
(R_nf*W_t)(x) ,
(f*W_t)(x)
\Big),
\ x \in \Rd, \ t>0.$$
Then, $\LL$ defines a topological isomorphism from $\Hpq$ onto $\HHpq$.
\end{Th}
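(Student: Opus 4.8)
The plan is to deduce Theorem~\ref{Th:1.3} from the characterizations already obtained in Theorems~\ref{Th:1.1} and~\ref{Th:1.2}, together with the standard relationship between the Poisson semigroup and the heat (Weyl) semigroup via subordination. First I would check that $\LL$ is well defined and that $\LL(f) \in \HHpq$ for every $f \in \Hpq$. That $\LL(f)$ satisfies the temperature Cauchy--Riemann system $TCR(\Rdd)$ is a pointwise computation: writing $u_j = R_j f * W_t$ for $j=1,\dots,d$ and $u_{d+1} = f * W_t$, the symmetry relations $b)$ follow because $R_j R_k = R_k R_j$ and $\partial_{x_j}(g * W_t) = (\partial_{x_j} g) * W_t$; the relations $a)$ and $c)$ follow from the Fourier-side identity $\widehat{\partial_t^{1/2} W_t}(\xi) = |\xi|\, \widehat{W_t}(\xi)$ (since $W_t$ solves the heat equation, $\partial_t W_t = \Delta W_t$, and $\partial_t^{1/2}$ has symbol $|\xi|$ against $e^{-t|\xi|^2}$), combined with $\widehat{R_j g}(\xi) = -i(\xi_j/|\xi|)\widehat{g}(\xi)$. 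For the norm bound, the key point is that $|\LL(f)(\cdot,t)| \le C\, M_{W}(f) + C\sum_{j} M_{W}(R_j f)$ where $M_W$ denotes the heat maximal function; since the heat maximal function is controlled by the Poisson (nontangential) maximal function, Theorem~\ref{Th:1.2} applied to $f$ and Theorem~\ref{Th:1.1}$(i)$ applied to control $\|(R_j f)*\phi_\eps\|_{p,q}$ give $\|\LL(f)\|_{\HHpq} \le C\|f\|_{\Hpq}$.

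Next I would show $\LL$ is injective and construct its inverse. Injectivity is immediate: if $\LL(f) = 0$ then in particular $f * W_t = 0$ for all $t>0$, hence $f = 0$ in $S(\Rd)'$. For surjectivity, given $F = (u_1,\dots,u_{d+1}) \in \HHpq$, I would first recover a boundary distribution from the last component. The uniform bound $\sup_{t>0}\|u_{d+1}(\cdot,t)\|_{p,q} < \infty$ together with the fact that $u_{d+1}$ solves the heat equation (which follows from the $TCR$ equations by differentiating, exactly as in the harmonic case the Cauchy--Riemann equations force harmonicity) lets one extract, along a sequence $t_k \to 0$, a weak-$*$ limit $f \in S(\Rd)'$ with $u_{d+1}(x,t) = (f * W_t)(x)$; one then shows $f \in \Hpq$ via Theorem~\ref{Th:1.2}$(ii)$, using that the nontangential Poisson maximal function of $f$ is dominated by $\sup_t \|u_{d+1}(\cdot,t)\|_{p,q}$ after passing from the heat to the Poisson semigroup by subordination. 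Finally, the conjugacy equations $c)$ force $u_j(\cdot,t) = (R_j f)*W_t$: indeed $c)$ says $\partial_{x_j} u_{d+1} = -i\,\partial_t^{1/2} u_j$, and on the Fourier side this reads $i\xi_j \widehat{f}(\xi)e^{-t|\xi|^2} = -i|\xi|\,\widehat{u_j}(\cdot,t)(\xi)$, whence $\widehat{u_j}(\cdot,t)(\xi) = -i(\xi_j/|\xi|)\widehat{f}(\xi)e^{-t|\xi|^2} = \widehat{(R_j f)*W_t}(\xi)$. Thus $F = \LL(f)$.

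The continuity of $\LL^{-1}$ then amounts to the estimate $\|f\|_{\Hpq} \le C\|F\|_{\HHpq}$, which I would obtain from the reconstruction above: $\|f\|_{\Hpq} \simeq \|u_{d+1}^*\|_{p,q} \le C\sup_{t>0}\|u_{d+1}(\cdot,t)\|_{p,q} \le C\|F\|_{\HHpq}$, using Theorem~\ref{Th:1.2} for the first equivalence and the maximal-function control of the heat semigroup for the second. Since $\LL$ is a bijective bounded linear map between quasi-Banach spaces with bounded inverse, it is a topological isomorphism.

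The main obstacle I anticipate is the careful handling of the fractional derivative $\partial_t^{1/2}$ and the passage between the heat and Poisson semigroups: one must justify the Fourier-multiplier identities for $\partial_t^{1/2}$ acting on solutions of the heat equation with only distributional boundary data (not a priori known to be tempered in a controlled way), and one must verify that the subordination formula $P_t = \int_0^\infty W_{s}\, \text{(subordinator)}\, ds$ transfers the uniform amalgam bounds correctly, including the passage to nontangential maximal functions needed to invoke Theorem~\ref{Th:1.2}. A secondary technical point is ensuring that the components $u_j$, $j=1,\dots,d$, of an arbitrary $F \in \HHpq$ are themselves controlled well enough (e.g. that they too solve the heat equation and have boundary values) so that the Fourier-side manipulations identifying $u_j$ with $(R_j f)*W_t$ are legitimate; this should follow by differentiating the $TCR$ relations, but the bookkeeping must be done with care in the low-exponent range $(d-1)/d < \min\{p,q\}$ where Riesz transforms are only bounded on $\Hpq$ by virtue of Theorem~\ref{Th:1.1}.
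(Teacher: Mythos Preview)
Your forward direction (that $\LL$ maps $\Hpq$ into $\HHpq$ boundedly) is essentially the paper's argument: Theorems~\ref{Th:1.1} and~\ref{Th:1.2} give $\sup_{t>0}\big(\|f*W_t\|_{p,q}+\sum_j\|R_jf*W_t\|_{p,q}\big)\le C\|f\|_{\Hpq}$, and the $TCR$ relations are checked as you say.

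The gap is in your inverse bound. You write
\[
\|f\|_{\Hpq}\simeq\|u_{d+1}^*\|_{p,q}\le C\sup_{t>0}\|u_{d+1}(\cdot,t)\|_{p,q}\le C\|F\|_{\HHpq},
\]
but the middle inequality is false when $\min\{p,q\}\le 1$: one cannot interchange $\sup_t$ and $\|\cdot\|_{p,q}$, and neither subordination nor ``maximal-function control of the heat semigroup'' gives $\big\|\sup_t|f*W_t|\big\|_{p,q}\le C\sup_t\|f*W_t\|_{p,q}$. Controlling the maximal function from mere uniform-in-$t$ norm bounds is exactly what requires the \emph{full} conjugate system; $u_{d+1}$ alone does not suffice. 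Relatedly, your Fourier-side identification $u_j=(R_jf)*W_t$ presumes that $u_j(\cdot,t)$ has a manageable Fourier transform, which is not known a priori for $F\in\HHpq$ with $\min\{p,q\}\le 1$.

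The paper closes both issues by a level-set translation argument. First it proves (Proposition~\ref{Prop:H2}\,$(i)$ and Lemma~\ref{Lem:3.3}) that for each fixed $t$ one has $u_j(\cdot,t)=\mathcal{R}_j(u_{d+1}(\cdot,t))$: the pointwise bound \eqref{H1} puts $u_{d+1}(\cdot,t_0)$ and $u_j(\cdot,t_0)$ in $(L^{\alpha p},\ell^{\alpha q})$ for all $\alpha\ge 1$, where the Fourier manipulations are legitimate. Then, for fixed $t_0>0$, it forms the Poisson extension of $v_{d+1}(\cdot,t_0)$ and applies Theorems~\ref{Th:1.1}--\ref{Th:1.2} to that function and its Riesz transforms, which are precisely $v_j(\cdot,t_0)$; this yields
\[
\Big\|\sup_{t>t_0}|f*W_t|\Big\|_{p,q}\le C\sup_{t>0}\Big(\|v_{d+1}(\cdot,t)\|_{p,q}+\sum_{j=1}^d\|v_j(\cdot,t)\|_{p,q}\Big)\le C\|F\|_{\HHpq},
\]
with $C$ independent of $t_0$. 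Monotone convergence as $t_0\to 0^+$ then gives $\|\sup_{t>0}|f*W_t|\|_{p,q}\le C\|F\|_{\HHpq}$ and hence $f\in\Hpq$. Notice how the components $u_1,\dots,u_d$ enter essentially; your proposed bound discards them and therefore cannot work in the range $(d-1)/d<\min\{p,q\}\le 1$.
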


This result is an extension of \cite[Theorem 2.10]{GP} to amalgam settings.\\

We also obtain characterizations of our Hardy spaces $\Hpq$ by using Fourier multipliers. We are motivated by the results in \cite[\S 16]{Uchib} (see also \cite{CCYY} and \cite{Uchi}).\\

By $\mathbb{S}^{d-1}$ we denote the unit sphere in $\Rd$. If $\theta \in L^\infty(\mathbb{S}^{d-1})$ we define the Fourier multiplier $m_\theta$ by
$$m_\theta(f)
:= \Big( \theta\Big( \frac{y}{|y|} \Big) \widehat{f} \Big)^{\vee},
\quad f \in L^2(\Rd).$$
Thus, $m_\theta$ is a bounded operator from $L^2(\Rd)$ into itself. Here, as usual, $\hat{g}$ and $\check{g}$ denotes the Fourier and inverse Fourier transform of $g$, respectively. It is clear that, for every $j=1, \dots, d$, the $j$-th Riesz transform $R_j$ coincides with $m_{\alpha_j}$ when $\alpha_j(z):=-i z_j$, $z \in \mathbb{S}^{d-1}$.\\

If $\theta \in C^\infty(\mathbb{S}^{d-1})$, then $m_\theta$ can be extended from
$L^2(\Rd) \cap \Hpq$ to $\Hpq$ as a bounded operator from $\Hpq$ into itself (see Section \ref{Sect:4}).\\

Suppose that
$\theta_j \in C^\infty(\mathbb{S}^{d-1})$, $j=1, \dots, m$, for certain $m \in \N$. We write $\Theta :=\{\theta_1, \dots, \theta_m\}$. We say that a function
$f \in L^2(\Rd)$ is in $\HHpqt$, with $0<p,q<\infty$, when
$m_{\theta_j}(f) \in (L^p,\ell^q)(\Rd)$, $j=1, \dots, m$. We define $\|f\|_{\Hpqt}$
as follows
$$\|f\|_{\Hpqt}
:= \sum_{j=1}^m \| m_{\theta_j}(f)  \|_{p,q}, \quad f \in \Hpqt.$$
The Hardy space $\Hpqt$ is defined as the completion of $\HHpqt$ with respect to the norm
$\| \cdot \|_{\Hpqt}$.\\

In the next result we establish a characterization of $\Hpq$ in terms of families of Fourier multipliers.

\begin{Th}\label{Th:1.4}
Assume that $m\in \mathbb{N}$ and $\Theta :=\{\theta_1, \dots, \theta_m\} \subset C^\infty(\mathbb{S}^{d-1})$
is such that
$$\rank \left(
\begin{array}{ccc}
\theta_1(y) & \dots & \theta_m(y) \\
\theta_1(-y) & \dots & \theta_m(-y) \\
\end{array}
\right) =2, \quad y \in \mathbb{S}^{d-1}.$$
There exists $1/2 < p_0 < 1$ such that if $p_0<\min\{p,q\}$ and $f \in L^2(\Rd)$, then $f \in \Hpq$ if, and only if, $m_{\theta_j}(f) \in (L^p,\ell^q)(\Rd)$, $j=1, \dots, m$. Furthermore, for every $f \in \Hpq \cap L^2(\Rd)$, the quantities $\|f\|_{\Hpq}$
and $\|f\|_{\Hpqt}$ are equivalent. Hence, $\Hpq = \Hpqt$ algebraic and topologically.
\end{Th}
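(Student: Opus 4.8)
The plan is to reduce the statement to Theorem \ref{Th:1.1}(ii), i.e.\ to the Riesz-transform characterization, by a standard Fourier-multiplier argument: the rank-two hypothesis on the matrix of $\theta_j(y),\theta_j(-y)$ is exactly what allows us to write each first-order (and hence each $k$-th order) Riesz symbol as a combination of the $\theta_j$ modulo a smooth homogeneous-degree-zero multiplier, so that the finite family $\{m_{\theta_j}\}$ controls $\{R_{j_1}\cdots R_{j_k}\}$ for $k$ up to some fixed $m$, and conversely. First I would fix $p_0$: choosing $m$ large enough that $(d-1)/(d+m-1)<1/2$, set $p_0$ to be any number in $(1/2,1)$ with $p_0>(d-1)/(d+m-1)$, so that Theorem \ref{Th:1.1}(ii) applies for this $m$ whenever $p_0<\min\{p,q\}$. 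Then for $f\in L^2(\Rd)$ the quantity $\|f\|_{p,q}+\sum_{k=1}^m\sum_{j_1,\dots,j_k}\|R_{j_1}\cdots R_{j_k}f\|_{p,q}$ is finite if and only if $f\in\Hpq$ (using also that, by Section \ref{Sect:2}, an $L^2$ distribution is automatically $(p,q)$-restricted at infinity, and that $f*\phi_\eps\to f$ suitably so the $\sup_\eps$ version follows; this mollification bookkeeping is routine and I would only sketch it).

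The heart of the matter is then the linear-algebra/multiplier step. On $\mathbb{S}^{d-1}$, consider the $2\times m$ matrix with rows $(\theta_1(y),\dots,\theta_m(y))$ and $(\theta_1(-y),\dots,\theta_m(-y))$; by hypothesis it has rank $2$ at every $y$. I claim one can choose, for each pair $(i,l)\in\{1,\dots,d\}^2$ (and more generally for each multi-index of length $\le m$), smooth functions $a^{(i,l)}_j\in C^\infty(\mathbb{S}^{d-1})$, $j=1,\dots,m$, such that
\begin{equation*}
\alpha_i(y)\,\alpha_l(y)=\sum_{j=1}^m a^{(i,l)}_j(y)\,\theta_j(y),\qquad y\in\mathbb{S}^{d-1},
\end{equation*}
and similarly for higher-order products $\alpha_{j_1}\cdots\alpha_{j_k}$, $k\le m$, where $\alpha_j(z)=-iz_j$ is the symbol of $R_j$. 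To see this: decompose each symbol into its even and odd parts; the rank-two condition says the values $(\theta_j(y))_j$ and $(\theta_j(-y))_j$ are linearly independent in $\C^m$, equivalently the $2\times 2$ Gram-type determinant is nonvanishing, so by a partition-of-unity/matrix-inverse argument on the compact manifold $\mathbb{S}^{d-1}$ one can smoothly solve for coefficients expressing any prescribed pair (even part, odd part) of a target symbol in terms of the $\theta_j$. A product $\alpha_{j_1}\cdots\alpha_{j_k}$ is a homogeneous polynomial of degree $k$ restricted to the sphere, hence bounded and smooth, and splits into even/odd parts, so it is covered. Conversely, each $\theta_j$ itself, having even and odd parts that are smooth, is a finite combination $\theta_j=\sum b_{j,0}+\sum_i b_{j,i}\,\alpha_i$ modulo… — more carefully, writing $\theta_j=\theta_j^{\mathrm{even}}+\theta_j^{\mathrm{odd}}$, the odd part is $\sum_i c_{j,i}(y)\alpha_i(y)$ with $c_{j,i}\in C^\infty$, and the even part is $d_j(y)=\sum_{i,l}e_{j,i,l}(y)\alpha_i(y)\alpha_l(y)+(\text{const})$ again with smooth coefficients; so $\theta_j$ lies in the algebra generated by $1$ and the $\alpha_i$ with smooth homogeneous-degree-zero coefficients, up to order $2\le m$.

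Once these symbol identities are in place, I would invoke the boundedness (stated in Section \ref{Sect:4}) of Fourier multipliers $m_b$ with $b\in C^\infty(\mathbb{S}^{d-1})$ on $\Hpq$ (for the relevant range of $p,q$), together with the fact that $m_{\alpha_i}=R_i$ and that composition of multipliers multiplies symbols. The identity $\alpha_{j_1}\cdots\alpha_{j_k}=\sum_j a_j\theta_j$ yields $R_{j_1}\cdots R_{j_k}f=\sum_j m_{a_j}\big(m_{\theta_j}f\big)$, so $\|R_{j_1}\cdots R_{j_k}f\|_{p,q}\lesssim\sum_j\|m_{\theta_j}f\|_{p,q}=\|f\|_{\Hpqt}$; summing over $k\le m$ and over multi-indices, and adding $\|f\|_{p,q}\le\|f\|_{\Hpqt}$ (from the $k=0$ or a trivial bound), gives one inequality, hence by Theorem \ref{Th:1.1}(ii) that $f\in\Hpq$ with $\|f\|_{\Hpq}\lesssim\|f\|_{\Hpqt}$. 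The reverse, $\|f\|_{\Hpqt}=\sum_j\|m_{\theta_j}f\|_{p,q}\lesssim\|f\|_{\Hpq}$, follows from the boundedness of each $m_{\theta_j}$ on $\Hpq$, i.e.\ the result quoted from Section \ref{Sect:4}. Finally, since $L^2(\Rd)\cap\Hpq$ is dense in both $\Hpq$ and in $\Hpqt$ (the latter by definition as a completion, the former being standard for these Hardy–amalgam spaces) and the two norms are equivalent there, the identity map extends to an isometric-up-to-equivalence isomorphism $\Hpq=\Hpqt$, which is the final assertion.

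I expect the main obstacle to be the smooth solvability step: passing from the pointwise rank-two condition to globally smooth coefficient functions $a_j^{(i,l)}\in C^\infty(\mathbb{S}^{d-1})$ on the whole sphere, and making sure the resulting combinations genuinely realize the full products $\alpha_{j_1}\cdots\alpha_{j_k}$ for all $k\le m$ rather than just $k=1$. This is a compactness plus partition-of-unity argument — cover $\mathbb{S}^{d-1}$ by finitely many patches on which a fixed $2\times 2$ minor of the matrix is invertible, solve locally by Cramer's rule (which gives smooth coefficients since the minor is smooth and nonvanishing there), then glue — but it must be carried out with care, including checking that antipodal symmetry of the even/odd decomposition is respected so that the same coefficients work at $y$ and $-y$. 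A secondary technical point is the mollification argument reconciling the $\sup_{\eps>0}\|(\cdot)*\phi_\eps\|_{p,q}$ formulation of Theorem \ref{Th:1.1} with the bare $\|m_{\theta_j}f\|_{p,q}$ appearing in the definition of $\Hpqt$, which uses the $(p,q)$-restriction at infinity of $L^2$ distributions and the convergence $g*\phi_\eps\to g$ in the amalgam quasi-norm; this is routine given Section \ref{Sect:2}.
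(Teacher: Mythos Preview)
Your reduction to Theorem \ref{Th:1.1}(ii) has a genuine gap in the hard direction. You want to write $R_{j_1}\cdots R_{j_k}f=\sum_j m_{a_j}\big(m_{\theta_j}f\big)$ and conclude $\|R_{j_1}\cdots R_{j_k}f\|_{p,q}\lesssim\sum_j\|m_{\theta_j}f\|_{p,q}$. For that you need $m_{a_j}$ bounded from $(L^p,\ell^q)(\Rd)$ into itself. But the hypothesis only gives $m_{\theta_j}f\in (L^p,\ell^q)(\Rd)$, and for $\min\{p,q\}\le 1$ a smooth homogeneous-degree-zero multiplier (a Calder\'on--Zygmund operator) is \emph{not} bounded on $(L^p,\ell^q)(\Rd)$ --- already on $L^p(\Rd)=(L^p,\ell^p)(\Rd)$ this fails. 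The boundedness quoted from Section~\ref{Sect:4} is on $\Hpq$, not on $(L^p,\ell^q)(\Rd)$, and you have no way of knowing that $m_{\theta_j}f\in\Hpq$ without already knowing $f\in\Hpq$. The same problem blocks your ``trivial bound'' $\|f\|_{p,q}\le\|f\|_{\Hpqt}$: writing $1=\sum_j a_j\theta_j$ still leaves you needing $m_{a_j}$ bounded on the amalgam space.

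This is exactly why the paper does \emph{not} reduce to Theorem \ref{Th:1.1} and instead invokes Uchiyama's method (\cite[Lemma~27.2]{Uchib}), which under the rank-two hypothesis produces a pointwise estimate of the form
\[
M_\varphi\big(m_{\theta_j}g\big)(x)\le C\,\MM_{p_0}\Big(\MM_{1/2}\Big(\sum_{\ell=1}^m|m_{\theta_\ell}g|\Big)\Big)(x),
\]
with some $p_0\in(1/2,1)$ depending only on $\Theta$ and $d$. Taking $(L^p,\ell^q)$-norms and using that the Hardy--Littlewood maximal operator $\MM_r$ is bounded on $(L^p,\ell^q)(\Rd)$ whenever $r<\min\{p,q\}$ (\cite[Proposition~11.12]{L+}) gives $\|m_{\theta_j}g\|_{\Hpq}\lesssim\sum_\ell\|m_{\theta_\ell}g\|_{p,q}$, and then a density-plus-area-integral argument (\cite[Theorems~3.11 and 3.17]{ZYYW}) yields $\|g\|_{\Hpq}\lesssim\sum_\ell\|m_{\theta_\ell}g\|_{p,q}$. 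The point is that Uchiyama's lemma replaces the unavailable multiplier boundedness on $(L^p,\ell^q)$ by a maximal-function inequality, and maximal functions \emph{are} bounded below the $L^1$ threshold after the $\MM_r$ rescaling. Your easy direction (via $\Hpq$-boundedness of $m_{\theta_j}$ and Fatou through $P_t$) is fine and matches the paper.
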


We use in the proof of Theorem \ref{Th:1.4} a method due to Uchiyama (\cite{Uchi} and \cite{Uchib}) which does not require subharmonic properties, in contrast with the arguments employed in the proof of Theorems \ref{Th:1.1} and \ref{Th:1.2}. A key result in the proof of Theorem \ref{Th:1.4} is \cite[Lemma 27.2]{Uchib}. Note that Theorems \ref{Th:1.1} and \ref{Th:1.2} are different from Theorem \ref{Th:1.4}. The characterization in Theorem \ref{Th:1.4} really holds only for functions in $L^2(\Rd) \cap \Hpq$. However, in Theorems \ref{Th:1.1} and \ref{Th:1.2}  we give characterizations for all distributions in $\Hpq$.\\

After this introduction in Section \ref{Sect:2} we prove Theorem \ref{Th:1.1} and \ref{Th:1.2}. Theorems \ref{Th:1.3} and \ref{Th:1.4} are proved in Section \ref{Sect:3}
 and \ref{Sect:4}, respectively.\\

Throughout this paper by $C$ we always denote a positive constant that might change from one line to another.

\section{Proof of Theorems \ref{Th:1.1} and \ref{Th:1.2}.}
\label{Sect:2}

In this section we present a proof of Theorems \ref{Th:1.1} and \ref{Th:1.2}. We adapt the classical proofs (\cite[III, \S 4]{St1}) to amalgam spaces $(L^p,\ell^q)(\Rd)$. We also use some of the ideas introduced in \cite{YZN} where  variable exponent Lebesgue spaces are considered.

\begin{Prop}\label{Prop:2.1}
Let $0<p,q<\infty$. Suppose that $u$ is a harmonic function on $\Rdd$. Then, $u^* \in (L^p,\ell^q)(\Rd)$ if, and only if, there exists $f \in \Hpq$ such that
$$u(x,t)
= (f * P_t)(x), \quad (x,t) \in \Rdd.$$
Furthermore, there exists $C>0$ for which
$$\frac{1}{C} \|f\|_{\Hpq}
\leq \|u^*\|_{p,q}
\leq C \|f\|_{\Hpq}.$$
\end{Prop}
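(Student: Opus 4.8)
The plan is to prove Proposition \ref{Prop:2.1} as the amalgam-space analogue of the classical equivalence between the nontangential maximal function of a harmonic function and membership in $\Hpq$, following \cite[III, \S 4]{St1} and adapting the real-variable machinery already available in \cite{AF1}.

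\emph{From $\Hpq$ to the maximal bound.} Suppose first that $u(x,t)=(f*P_t)(x)$ for some $f\in\Hpq$. Since distributions in $\Hpq$ are bounded, the convolution $f*P_t$ makes sense pointwise as recalled in the introduction, and by \cite[p. 16]{AF1} the radial maximal function $u_+(x)=\sup_{t>0}|(f*P_t)(x)|$ lies in $(L^p,\ell^q)(\Rd)$ with $\|u_+\|_{p,q}\lesssim\|f\|_{\Hpq}$. To pass from the radial to the nontangential maximal function $u^*$ I would invoke the grand/nontangential maximal function characterization of $\Hpq$ in \cite[Theorem 3.7]{AF1}: the Poisson kernel $P$ is (a dilation of) a fixed Schwartz-class-like function with nonzero integral, so $u^*$ is dominated by the nontangential maximal function associated with $P$, which in turn is controlled by the grand maximal function and hence by $\|f\|_{\Hpq}$. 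This gives $\|u^*\|_{p,q}\le C\|f\|_{\Hpq}$.

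\emph{From the maximal bound to $\Hpq$.} Conversely, assume $u$ is harmonic on $\Rdd$ with $u^*\in(L^p,\ell^q)(\Rd)$. The first step is to recover a boundary distribution $f$. Fix $\eps>0$ and set $u^\eps(x,t):=u(x,t+\eps)$; then $u^\eps(\cdot,t)$ is, for each fixed $t$, bounded on $\Rd$ (controlled by $u^*$ via a Fatou/subharmonicity argument, using that $|u|^s$ is subharmonic for suitable $s$, exactly as in Stein). One shows $u^\eps(x,t)=(u(\cdot,\eps)*P_t)(x)$ for $t>0$ by the semigroup property of the Poisson kernel together with the boundedness of $u(\cdot,\eps)$. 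The family $\{u(\cdot,\eps)\}_{\eps>0}$ is bounded in $(L^p,\ell^q)(\Rd)$ (in fact in the dual-pairing sense needed here) uniformly in $\eps$, because $|u(x,\eps)|\le u^*(x)$; hence one can extract a weak-$*$ limit $f\in S(\Rd)'$ of $u(\cdot,\eps)$ as $\eps\to 0^+$ along a subsequence. Letting $\eps\to0$ in $u^\eps(x,t)=(u(\cdot,\eps)*P_t)(x)$ yields $u(x,t)=(f*P_t)(x)$. Finally, $M_P(f)(x)=\sup_{t>0}|(f*P_t)(x)|\le u^*(x)$, so by the maximal characterization of $\Hpq$ we conclude $f\in\Hpq$ with $\|f\|_{\Hpq}\le C\|u^*\|_{p,q}$.

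\emph{Main obstacle.} The delicate point, as usual in this circle of ideas, is the uniform control of the "slices" $u(\cdot,\eps)$ and the extraction of the boundary distribution: one must justify that the bound $u^*\in(L^p,\ell^q)(\Rd)$ forces $u(\cdot,\eps)$ to be not merely bounded but to form a bounded set in an appropriate space allowing a weak-$*$ compactness argument in $S(\Rd)'$, and that the limit is independent of the subsequence (which follows once $u(x,t)=(f*P_t)(x)$ is established, since $P_t$ is an approximate identity). Handling this requires the subharmonicity of $|u|^s$ for $s$ close to $\min\{p,q\}$ together with a mean-value/Harnack estimate to turn the nontangential bound into pointwise and local-integral bounds on $u(\cdot,\eps)$ with constants uniform in $\eps$; this is where the amalgam structure (local $L^p$, global $\ell^q$) must be threaded through the classical argument, and I expect it to be the most technical step of the proof.
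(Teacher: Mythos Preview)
Your proposal follows essentially the same route as the paper's proof: the forward direction via \cite[p.~16]{AF1}, and the converse via the $\eps$-translates $u(\cdot,\eps)$, uniqueness of bounded harmonic extensions, and a weak-$*$ compactness argument in $S(\Rd)'$. Two remarks are worth making.

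First, you overcomplicate the pointwise boundedness of $v_\eps(x,t)=u(x,t+\eps)$. No subharmonicity of $|u|^s$ is needed here; that tool enters only later (Lemma~\ref{Lem:2.2} and Proposition~\ref{Prop:2.3}) for the Cauchy--Riemann system, and it is precisely why those results require $(d-1)/d<\min\{p,q\}$ while Proposition~\ref{Prop:2.1} holds for all $0<p,q<\infty$. The paper simply observes that $|u(x,t)|\le u^*(y)$ for every $y\in B(x,t)$, averages over $B(x,t)$, and applies the amalgam H\"older inequality to $\||u^*|^{\min\{p,q\}}\|$ against the indicator of the ball, yielding $|u(x,t)|\le C t^{-d/\max\{p,q\}}\|u^*\|_{p,q}$. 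Your ``main obstacle'' paragraph is therefore aimed at the wrong target.

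Second, on the compactness step: your pointwise bound $|u(x,\eps)|\le u^*(x)$ is correct but, for small $p$ or $q$, does not by itself give boundedness of $\{f_\eps\}$ in $S(\Rd)'$. The paper closes this gap cleanly by first noting that $f_\eps=u(\cdot,\eps)$ already lies in $\Hpq$ with $\|f_\eps\|_{\Hpq}\le C\|u^*\|_{p,q}$ uniformly in $\eps$ (since $P_*(f_\eps)\le u^*$, using \cite[p.~16]{AF1}), and then invoking the continuous embedding $\Hpq\hookrightarrow S(\Rd)'$ from \cite[Proposition~3.8(1)]{AF1} to get boundedness in $S(\Rd)'$ and apply Banach--Alaoglu. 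After extracting the limit $f$, the paper passes through a Schwartz function $\Phi$ (via \cite[p.~99]{St1}) with $M_\Phi(f_\eps)\le Cu^*$ to conclude $f\in\Hpq$; your direct route via $M_P(f)\le u^*$ is also fine once $f$ is known to be a bounded distribution, again by \cite[p.~16]{AF1}.
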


\begin{proof}
Assume that
$u(x,t)
= (f * P_t)(x)$, $(x,t) \in \Rdd$, for some $f \in \Hpq$.
Then, according to \cite[pp. 14 and 16]{AF1} we have that $f$ is a bounded distribution and
$u^* \in (L^p,\ell^q)(\Rd)$. Also,
$$\|u^*\|_{p,q}
\leq C \|f\|_{\Hpq}.$$

Suppose now that $u^* \in (L^p,\ell^q)(\Rd)$ and take $\eps>0$. The function
$$v_\eps(x,t):=u(x, t +\eps), \quad (x,t) \in \overline{\Rdd},$$
is bounded in $\overline{\Rdd}$. Indeed, by using H\"older inequality
(\cite[Proposition 11.5.4]{Heil}) we get, if $0<q \leq p$,
\begin{align*}
|u(x,t)|^q
& = \frac{1}{|B(x,t)|} \int_{B(x,t)} |u(x,t)|^q \, dy
\leq \frac{1}{|B(x,t)|} \int_{B(x,t)} |u^*(y)|^q \, dy \\
& \leq \frac{1}{|B(x,t)|} \||u^*|^q\|_{p/q,1} \, \|\1_{B(x,t)}\|_{(p/q)',\infty}
\leq \frac{C}{t^{nq/p}} \|u^*\|^q_{p,q},
\quad x \in \Rd, \, t>0,
\end{align*}
and, if $0<p \leq q$,
\begin{align*}
|u(x,t)|^p
& \leq \frac{1}{|B(x,t)|} \int_{B(x,t)} |u^*(y)|^p \, dy
 \leq \frac{1}{|B(x,t)|} \||u^*|^p\|_{1,q/p} \, \|\1_{B(x,t)}\|_{\infty,(q/p)'} \\
&\leq \frac{C}{t^{np/q}} \|u^*\|^p_{p,q},
\quad x \in \Rd, \, t>0.
\end{align*}
Hence,
$$|v_\eps(x,t)|
\leq \frac{C}{\eps^{n/\max\{p,q\}}} \|u^*\|_{p,q},
\quad x \in \Rd, \, t>0.$$

Furthermore, $v_\eps$ is continuous in $\overline{\Rdd}$ and harmonic in $\Rdd$. In particular, the function
$$f_\eps(x)
:=u(x,\eps), \quad x \in \Rd,$$
is continuous and bounded in $\Rd$. We consider the function
$$u_\eps(x,t)
:= P_t * f_\eps(x), \quad x \in \Rd, \, t>0.$$
Then, $u_\eps$ is continuous and bounded in $\overline{\Rdd}$, harmonic in $\Rdd$, and $u_\eps(x,0)=f_\eps(x)$, $x \in \Rd$.
According to \cite[Theorem 1.5, p. 40]{StWe} we conclude that
$u_\eps(x,t)=v_\eps(x,t)$, $x \in \Rd$ and $t \geq 0$.\\

By \cite[p. 16]{AF1} it follows that $f_\eps \in \Hpq$ and
$$\|f_\eps\|_{\Hpq}
\leq C \|P_*(f_\eps)\|_{p,q}
\leq C \|u^*\|_{p,q},$$
where the constant $C$ does not depend on $\eps$.\\

From \cite[Proposition 3.8 (1)]{AF1} the set $\{f_\eps\}_{\eps>0}$ is bounded in $S(\Rd)'$ when $S(\Rd)'$ is endowed with the weak--$*$ topology or with the strong topology.
Then, Banach-Alaoglu's Theorem (see, for example, \cite[pp. 68-70]{Rud}) implies that there exists $f \in S(\Rd)'$ such that
$$f_{\eps_k} \longrightarrow f, \quad \text{as } k \to \infty,
\quad \text{in } S(\Rd)',$$
for a certain decreasing sequence of positive numbers $\{\eps_k\}_{k \in \N}$ such that $\eps_k \to 0$, as $k \to \infty$. We recall that if $\{g_k\}_{k \in \N}$ is a sequence in $S(\Rd)'$ and $g \in S(\Rd)'$, then $g_k \longrightarrow g$, as $k \to \infty$, in the weak--$*$ topology of $S(\Rd)'$ if, and only if, $g_k \longrightarrow g$, as $k \to \infty$, in the strong topology of $S(\Rd)'$.\\

The arguments in \cite[p. 99]{St1} prove that there exists $\Phi \in S(\Rd)$ such that
$$M_\Phi(f_\eps)
\leq C u_\eps^*
\leq C u^*, \qquad \eps>0.$$
Then,
$\displaystyle \sup_{\eps>0} M_\Phi(f_\eps) \in (L^p,\ell^q)(\Rd)$. We have that
$$\lim_{k \to \infty} (f_{\eps_k}*\Phi_t)(x)
=(f*\Phi_t)(x), \quad x \in \Rd, \, t>0.$$
We get,
$$M_\Phi(f)
\leq \sup_{k \in \N} M_\Phi(f_{\eps_k}) \in (L^p,\ell^q)(\Rd).$$
From \cite[Theorem 3.7]{AF1} we deduce that $f \in \Hpq$.
As in \cite[p. 120]{St1} we also obtain that
$$u(x,t)
= \lim_{k \to \infty} u(x,t+\eps_k)
=(f * P_t)(x), \qquad x \in \Rd, \, t>0.$$
\end{proof}

We now establish some auxiliary results.

\begin{Lem}\label{Lem:2.2}
Let $0<p,q<\infty$ such that $(d-1)/d < \min\{p,q\}$.
Suppose that $F \in CR(\Rdd)$ and
$$\sup_{t>0} \| |F(\cdot,t)| \|_{p,q} < \infty.$$
Then, for any $\eta \in [(d-1)/d,\min\{p,q\})$ and $a \in (0,\infty)$,
$$|F(x,t+a)|
\leq \Big( (P_t*|F(\cdot,a)|^\eta)(x)\Big)^{1/\eta}, \quad x \in \Rd, \ t>0.$$
Furthermore, there exists a measurable function $g \geq 0$ on $\Rd$ such that
$$g(x)
:= \lim_{a \to 0^+} |F(x,a)|, \quad \text{a.e. } x \in \Rd,$$
and
$$|F(x,t)|
\leq \Big( (g^\eta * P_t)(x)\Big)^{1/\eta}, \quad x \in \Rd, \ t>0.$$
\end{Lem}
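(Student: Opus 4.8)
The plan is to follow the classical argument for conjugate harmonic systems (as in \cite[III, \S 4]{St1}), adapting the estimates to the amalgam norm. The starting point is the standard fact that if $F=(u_1,\dots,u_{d+1}) \in CR(\Rdd)$, then $|F|^\eta$ is subharmonic in $\Rdd$ for every $\eta \geq (d-1)/d$; this is exactly the range condition in the hypothesis, and it is the reason the threshold $(d-1)/d$ appears. Fix $a>0$ and consider the shifted vector $F_a(x,t) := F(x,t+a)$, which is a $CR$ system on a neighbourhood of $\overline{\Rdd}$, with $|F_a(\cdot,t)|$ bounded on each horizontal slice (indeed $\sup_t \||F(\cdot,t)|\|_{p,q} < \infty$ forces, via the Hölder-type local estimate used in Proposition \ref{Prop:2.1}, a pointwise bound $|F(x,t)| \leq C t^{-d/\max\{p,q\}}\sup_{s>0}\||F(\cdot,s)|\|_{p,q}$, hence boundedness and the requisite decay to apply the Poisson representation). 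Then $|F_a|^\eta$ is subharmonic, continuous up to the boundary, and bounded, so by the maximum principle / Poisson majorization for subharmonic functions on the half-space (Stein--Weiss, \cite[Theorem 1.5, p.~40]{StWe} together with the subharmonicity) we obtain
$$|F(x,t+a)|^\eta \leq \big(P_t * |F(\cdot,a)|^\eta\big)(x), \quad x \in \Rd,\ t>0,$$
which is the first claimed inequality after taking $\eta$-th roots.

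For the second part I would produce the boundary function $g$ by a weak-compactness argument on the family $\{|F(\cdot,a)|^\eta\}_{a>0}$. Since $\min\{p/\eta,q/\eta\}>1$, the functions $|F(\cdot,a)|^\eta$ lie in $(L^{p/\eta},\ell^{q/\eta})(\Rd)$ with norm bounded uniformly in $a$; this is a reflexive (indeed, for $1<p/\eta,q/\eta<\infty$ the amalgam space is a reflexive Banach function space) dual-type space, so along a sequence $a_k \to 0^+$ we can extract a weak-$*$ (weak) limit $h \geq 0$. Passing to the limit $a\to 0^+$ in $|F(x,t+a)|^\eta \leq (P_t * |F(\cdot,a)|^\eta)(x)$, using that $y \mapsto P_t(x-y)$ is a legitimate test function against which the weak convergence applies, gives $|F(x,t)|^\eta \leq (P_t * h)(x)$. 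It then remains to upgrade the weak limit to an \emph{almost everywhere pointwise} limit: here one uses that the harmonic majorant $(P_t*h)(x)$ combined with the mean-value / Harnack-type control on the subharmonic function $|F|^\eta$ forces $|F(\cdot,t)|^\eta$ to have nontangential boundary values a.e.\ (Fatou-type theorem for functions dominated by Poisson integrals of $(L^1_{loc}$-type) measures), and these boundary values necessarily coincide with $h$ a.e.\ and with $\lim_{a\to0^+}|F(x,a)|$. Setting $g := h^{1/\eta}$ finishes the proof.

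The main obstacle I anticipate is precisely the last step: establishing that the limit $g(x) = \lim_{a\to 0^+}|F(x,a)|$ exists a.e.\ and is consistent with the weak limit used to get the Poisson majorization. The cleanest route is to first fix one $\eta_0 \in [(d-1)/d, \min\{p,q\})$ and get the majorant $|F(\cdot,t)|^{\eta_0} \leq P_t * h_0$; then $|F(\cdot,t)|$ is dominated by $(P_t*h_0)^{1/\eta_0}$, which is bounded on every compact subset of $\Rdd$ and whose boundary behaviour is governed by the Fatou theorem for Poisson integrals; combined with subharmonicity of $|F|^{\eta_0}$ this yields existence of nontangential limits a.e.\ (cf.\ the argument in \cite[III, \S 4]{St1} and the variable-exponent adaptation in \cite{YZN}). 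Once the a.e.\ limit $g$ exists, one checks that $g^{\eta_0} = h_0$ a.e.\ (both are the a.e.\ boundary value of the same subharmonic function), and that $g$ is independent of the choice of $\eta$ because for $\eta \in [\eta_0,\min\{p,q\})$ the same reasoning applied to $\eta$ must return $g^\eta$ as the boundary function, so the stated inequality $|F(x,t)| \leq (g^\eta * P_t)(x)^{1/\eta}$ holds simultaneously for all admissible $\eta$. A minor technical point to handle carefully is the passage from the slice bound at height $t+a$ to the boundary: one should let $a\to 0$ only after $t$ is fixed, and invoke monotone/dominated convergence using the uniform slice bounds to justify interchanging the limit with the Poisson integral.
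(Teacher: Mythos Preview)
Your outline is correct but the route differs from the paper's. The paper does not treat the two assertions separately: after recording the subharmonicity of $|F_a|^\eta$ it verifies a single integrability condition
\[
\sup_{t>0}\int_{\Rd}\frac{|F_a(x,t)|^{\eta r}}{(1+t+|x|)^{d+1}}\,dx<\infty
\]
for some $r\in(1,\min\{p/\eta,q/\eta\})$ (via H\"older on amalgams and a direct estimate of $\|(1+|\cdot|)^{-(d+1)}\|_{(p/(\eta r))',(q/(\eta r))'}$), and then invokes Nualtaranee's theorem on least harmonic majorants in half-spaces \cite[Theorem~8]{Nu}. That black-box result delivers \emph{simultaneously} the Poisson majorization $|F_a(x,t)|^\eta\le (P_t*|F(\cdot,a)|^\eta)(x)$, the existence of the a.e.\ boundary limit $g$, and the final inequality $|F(x,t)|^\eta\le (P_t*g^\eta)(x)$.

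Your approach is more hands-on: boundedness plus subharmonicity for the first inequality, then weak compactness in the reflexive space $(L^{p/\eta},\ell^{q/\eta})(\Rd)$ to produce a candidate for $g^\eta$. This is sound, and the weak-compactness step you describe is in fact what the paper uses --- but only later, in the proof of Proposition~\ref{Prop:2.3}, \emph{after} Lemma~\ref{Lem:2.2} has already supplied the a.e.\ limit. The cost of your ordering is precisely the obstacle you flag: upgrading the weak limit to the a.e.\ pointwise limit $\lim_{a\to0^+}|F(x,a)|$ requires a Fatou-type theorem for nonnegative subharmonic functions admitting a harmonic majorant, which is essentially the content of \cite{Nu}. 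So your plan can be completed, but the cleanest way to close its last step is to cite Nualtaranee anyway --- at which point the paper's shorter argument is available from the start. One minor point: the reference \cite[Theorem~1.5, p.~40]{StWe} you cite is a uniqueness statement for bounded \emph{harmonic} functions; the subharmonic majorization you need is standard but calls for a separate reference or a brief Phragm\'en--Lindel\"of argument.
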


\begin{proof}
In order to prove this lemma we can proceed as in \cite[p. 122]{St1}. For $a>0$ we define
$$F_a(x,t)
:= F(x,t+a), \quad x \in \Rd, \, t>0.$$
According to \cite[Theorem VI.4.14]{StWe2}, $|F_a|^\delta$ is subharmonic in $\Rdd$ provided that $\delta \geq (d-1)/d$.
Let $\eta \in [(d-1)/d , \min\{p,q\})$. By using
\cite[Theorem 8]{Nu}, our lemma will be established once we guarantee that for some $r \in (1,\min\{p/\eta,q/\eta\})$
\begin{equation}\label{eq:R1}
\sup_{t>0} \int_{\Rd}
\frac{|F_a(x,t)|^{\eta r}}{(1+t+|x|)^{d+1}}
\, dx < \infty.
\end{equation}
Let $r \in (1,\min\{p/\eta,q/\eta\})$. By using H\"older inequality \cite[Proposition 11.5.4]{Heil} we get
\begin{align*}
 \int_{\Rd}
\frac{|F_a(x,t)|^{\eta r}}{(1+t+|x|)^{d+1}}
\, dx
& \leq \Big\| |F_a(\cdot,t)|^{\eta r}\Big\|_{p/(\eta r), q/(\eta r)} \,
\Big\| (1+t+|\cdot|)^{-(d+1)}  \Big\|_{(p/(\eta r))', (q/(\eta r))'} \\
& \leq \sup_{t>0} \Big\| |F_a(\cdot,t)|\Big\|^{\eta r}_{p, q} \,
\Big\| (1+|\cdot|)^{-(d+1)}  \Big\|_{(p/(\eta r))', (q/(\eta r))'}, \quad t>0.
\end{align*}
We have that
\begin{align}\label{eq:p7}
& \Big\| (1+|\cdot|)^{-d-1}  \Big\|^{(q/(\eta r))'}_{(p/(\eta r))', (q/(\eta r))'} \nonumber \\
& \qquad \leq C \int_{\Rd} \Big(
\int_{B(0,1)} (1+|x-y|)^{-(d+1)p/(p-\eta r)} dx
\Big)^{\frac{p-\eta r}{p} \frac{q}{q-\eta r}} \, dy \nonumber \\
& \qquad = C \Big\{ \int_{B(0,1)} \Big(
\int_{B(0,1)} (1+|x-y|)^{-(d+1)p/(p-\eta r)} dx
\Big)^{\frac{p-\eta r}{p} \frac{q}{q-\eta r}} \, dy \nonumber \\
& \qquad \qquad  + \int_{\Rd \setminus B(0,1)} \Big(
\int_{B(0,1)} (1+|x-y|)^{-(d+1)p/(p-\eta r)} dx
\Big)^{\frac{p-\eta r}{p} \frac{q}{q-\eta r}} \, dy
\Big\} \nonumber \\
& \qquad = C \Big\{ 1 + \int_{\Rd \setminus B(0,1)} \Big(
\int_{B(0,1)} (1+|x-y|)^{-(d+1)p/(p-\eta r)} dx
\Big)^{\frac{p-\eta r}{p} \frac{q}{q-\eta r}} \, dy
\Big\}.
\end{align}
Since
$$|y|
\leq |y-x| + |x|
\leq 1 + |x-y|, \quad x \in B(0,1), \quad y \in \Rd,$$
we get
\begin{align*}
& \int_{\Rd \setminus B(0,1)} \Big(
\int_{B(0,1)} (1+|x-y|)^{-(d+1)p/(p-\eta r)} dx
\Big)^{\frac{p-\eta r}{p} \frac{q}{q-\eta r}} \, dy \\
& \qquad \qquad \leq C
\int_{\Rd \setminus B(0,1)} |y|^{-(d+1)q/(q-\eta r)}  \, dy
\leq C.
\end{align*}
It follows that
$$\Big\| (1+|\cdot|)^{-d-1}  \Big\|_{(p/(\eta r))', (q/(\eta r))'}
< \infty,$$
and \eqref{eq:R1} holds.
\end{proof}

\begin{Prop}\label{Prop:2.3}
Let $(d-1)/d < min\{p,q\}<\infty$.
Suppose that $u$ is a harmonic function in $\Rdd$. Then,
$u^* \in (L^p,\ell^q)(\Rd)$ if, and only if, there exists a harmonic vector
$F:=(u_1, \dots, u_{d+1}) \in CR(\Rdd)$ such that $u_{d+1}:=u$ and
$$\sup_{t>0} \| |F(\cdot,t)| \|_{p,q} < \infty.$$
Furthermore, the quantities
$\displaystyle \sup_{t>0} \| |F(\cdot,t)| \|_{p,q}$
and
$\|u^*\|_{p,q}$
are equivalent.
\end{Prop}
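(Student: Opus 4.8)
The plan is to prove the two implications of the stated equivalence separately, keeping track of the constants so that the quasi-norm equivalence comes out simultaneously.

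\emph{From the Cauchy--Riemann system to $u^*$.} Suppose $F=(u_1,\dots,u_{d+1})\in CR(\Rdd)$ with $u_{d+1}=u$ and $A:=\sup_{t>0}\||F(\cdot,t)|\|_{p,q}<\infty$. I would fix $\eta\in[(d-1)/d,\min\{p,q\})$ and invoke Lemma \ref{Lem:2.2} to obtain $g\ge0$ with $g=\lim_{a\to0^+}|F(\cdot,a)|$ a.e.\ and $|u(x,t)|\le|F(x,t)|\le\big((g^\eta*P_t)(x)\big)^{1/\eta}$. Taking the nontangential supremum and dominating the nontangential maximal function of a Poisson integral by the Hardy--Littlewood maximal operator $M$ gives $u^*(x)\le C\,(M(g^\eta)(x))^{1/\eta}$. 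Since $\eta<\min\{p,q\}$, the exponents $p/\eta$ and $q/\eta$ exceed $1$, so $M$ is bounded on $(L^{p/\eta},\ell^{q/\eta})(\Rd)$; using the elementary identity $\||h|^{\eta}\|_{p/\eta,q/\eta}=\|h\|_{p,q}^{\eta}$ one then gets $\|u^*\|_{p,q}\le C\|g^\eta\|_{p/\eta,q/\eta}^{1/\eta}=C\|g\|_{p,q}$, and finally $\|g\|_{p,q}\le A$ by the Fatou property of $(L^p,\ell^q)(\Rd)$ (it is a ball quasi-Banach function space, \cite{ZYYW}). Hence $u^*\in(L^p,\ell^q)(\Rd)$ with $\|u^*\|_{p,q}\le CA$.

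\emph{From $u^*$ to the Cauchy--Riemann system.} Assuming $u^*\in(L^p,\ell^q)(\Rd)$, Proposition \ref{Prop:2.1} produces $f\in\Hpq$ with $u=f*P_t$ and $\|f\|_{\Hpq}\le C\|u^*\|_{p,q}$. Since $f$ is $(p,q)$-restricted at infinity, the Riesz transforms $R_jf$ are defined as in \cite[p.~123]{St1}, and I would set $u_j:=R_jf*P_t=f*Q_t^{(j)}$ (with $Q_t^{(j)}$ the $j$-th conjugate Poisson kernel) for $j=1,\dots,d$ and $u_{d+1}:=u$. Each $u_j$ is harmonic, and because the vector $(Q_t^{(1)},\dots,Q_t^{(d)},P_t)$ itself satisfies the generalized Cauchy--Riemann equations, so does $F:=(u_1,\dots,u_d,u)$, i.e.\ $F\in CR(\Rdd)$. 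To bound $\sup_{t>0}\||F(\cdot,t)|\|_{p,q}$ I would use that $R_j$ maps $\Hpq$ boundedly into itself (via the atomic decomposition of $\Hpq$ together with Calder\'on--Zygmund theory, cf.\ \cite{AF2}), so that $R_jf\in\Hpq$ and, by the first half of Proposition \ref{Prop:2.1} applied to $R_jf$, $(R_jf)^*\in(L^p,\ell^q)(\Rd)$ with $\|(R_jf)^*\|_{p,q}\le C\|f\|_{\Hpq}$. Since $|u_j(x,t)|\le(R_jf)^*(x)$ and $|u(x,t)|\le u^*(x)$ uniformly in $t$, one gets $|F(x,t)|\le u^*(x)+\sum_{j=1}^d(R_jf)^*(x)$, hence $\sup_{t>0}\||F(\cdot,t)|\|_{p,q}\le C\|u^*\|_{p,q}$. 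For the equivalence of the \emph{quantities} one also notes that a harmonic vector in $CR(\Rdd)$ with last component $u$ and finite $\sup_t\||F(\cdot,t)|\|_{p,q}$ is essentially unique: the difference of two such vectors is a $t$-independent curl-free, divergence-free field whose components are harmonic functions on $\Rd$ lying in $(L^p,\ell^q)(\Rd)$, hence vanish at infinity and, by Liouville, vanish identically.

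\emph{Main obstacle.} The delicate part is the second implication, precisely the uniform-in-$t$ amalgam estimate for $|F(\cdot,t)|$: $f$ need not belong to $L^2(\Rd)$ — and when $p,q\le1$ it need not even be a function — so the conjugates must be handled through the ``restricted at infinity'' device rather than as ordinary Fourier multipliers, and the $\Hpq$-boundedness of $R_j$ has to be in place. If one prefers to avoid invoking that boundedness here, the alternative (closer to \cite[pp.~122--124]{St1}) is to argue with the truncations $v_\eps(x,t):=u(x,t+\eps)$, whose boundary data $f_\eps=u(\cdot,\eps)$ are bounded functions lying in $\Hpq$ uniformly in $\eps$ (as in the proof of Proposition \ref{Prop:2.1}); one then forms the renormalized conjugate Poisson integrals $F_\eps$, controls $\sup_t\||F_\eps(\cdot,t)|\|_{p,q}$ uniformly in $\eps$ by the subharmonicity of $|F_\eps|^{(d-1)/d}$ (\cite[Theorem VI.4.14]{StWe2}) combined with Lemma \ref{Lem:2.2}, and passes to a limit vector $F$ as $\eps\to0^+$ via a normal-families argument, exactly as in the harmonic case of Proposition \ref{Prop:2.1}.
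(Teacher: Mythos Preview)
Your proposal is correct and follows essentially the same route as the paper: for the direction $u^*\Rightarrow F$ you both pass through Proposition~\ref{Prop:2.1} and the $\Hpq$-boundedness of the Riesz transforms to build $F=(R_1f*P_t,\dots,R_df*P_t,f*P_t)$, and for $F\Rightarrow u^*$ you both combine Lemma~\ref{Lem:2.2} with the boundedness of the Hardy--Littlewood maximal operator on $(L^{p/\eta},\ell^{q/\eta})(\Rd)$. The only tactical difference is that in the latter direction the paper extracts the limit $g$ via weak compactness in the reflexive space $(L^{p/\eta},\ell^{q/\eta})(\Rd)$ and a duality argument, whereas you use the a.e.\ limit furnished directly by Lemma~\ref{Lem:2.2} together with Fatou; both are valid and lead to the same estimate.
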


\begin{proof}
This proposition can be proved in a similar way to
\cite[Proposition 2.6]{YZN}. The properties that we need will be specified in the sequel.\\

Assume that $u$ is harmonic in $\Rdd$ such that
$u^* \in (L^p,\ell^q)(\Rd)$. According to Proposition \ref{Prop:2.1} there exits $f \in \Hpq$ such that
$$u(x,t)
= (f * P_t)(x), \quad (x,t) \in \Rdd,$$
and
$$\|f\|_{\Hpq}
\leq C \|u^*\|_{p,q}.$$
By \cite[Theorem 3.11]{ZYYW}, for every $r>1$,
$L^r(\Rd) \cap \Hpq$ is a dense subspace of $\Hpq$. For every $j=1,\ldots,d$, and $f\in L^r(\Rd)$, $1\le r<\infty$, the j-th Riesz transform $\mathcal{R}_j(f)$ of $f$ is defined by
$$
\mathcal{R}_j(f)(x):=\lim_{\eps\to 0^+}\int_{|x-y|>\eps}K_j(x-y)f(y)dy, \quad a.\,e.\quad x\in \Rd,
$$
where
$$K_j(x)
:= \frac{\G((d+1)/2)}{\pi^{(d+1)/2}} \frac{x_j}{|x|^{d+1}}, \quad x \in \Rd \setminus \{0\}.$$
If $\phi,\varphi\in S(\Rd)$, by using Fourier transformation it is not hard to see that $\phi_t*\varphi\to 0$, as $t\to\infty$, in $L^2(\Rd)$. Hence, for every $g\in L^2(\Rd)$ and $\phi\in S(\Rd)$, $g*\phi_t\to 0$, as $t\to\infty$, in $S(\Rd)'$. By using \cite[Theorem 3.17]{ZYYW} and proceeding as in the proof of \cite[Theorem 16.1]{Uchib} we can prove that, for every $j=1,\ldots,d$, the j-th Riesz transform $\mathcal{R}_j$ can be extended from $L^2(\Rd) \cap \Hpq$ to $\Hpq$ as a bounded operator from $\Hpq$ into itself. This property was also proved in \cite[Corollary 4.19]{AF2} when $p\le q$. By taking into account that if
$\{g_k\}_{k=1}^\infty \subset (L^p,\ell^q)(\Rd)$ and
$$g_k \longrightarrow g, \quad \text{as } k \to \infty, \quad \text{in } (L^p,\ell^q)(\Rd),$$ there exists an increasing sequence $\{n_k\}_{k=1}^\infty \subset \N$ such that
$$g_{n_k}(x) \longrightarrow g(x), \quad \text{as } k \to \infty, \quad \text{a.e. } x \in \Rd,$$
the arguments in \cite[pp. 261-262]{YZN} allow us to show that
$$\sup_{t>0} \| |F(\cdot,t)| \|_{p,q}
\leq C \|u^*\|_{p,q},$$
where
$F:=(R_1 f*P_t, \dots, R_n f *P_t, f*P_t ) \in CR(\Rdd)$ .\\

Suppose now that $F:=(u_1, \dots, u_{d+1}) \in CR(\Rdd)$, $u_{d+1}:=u$, and
$$\sup_{t>0} \| |F(\cdot,t)| \|_{p,q}<\infty.$$
Let $\eta \in ((d-1)/d , \min\{p,q\})$. We have that
$$\sup_{t>0} \| |F(\cdot,t)|^\eta \|^{1/\eta}_{p/\eta,q/\eta}
= \sup_{t>0} \| |F(\cdot,t)| \|_{p,q}.$$
Since $(L^{p/\eta},\ell^{q/\eta})(\Rd)$ is a reflexive Banach space, there exists a decreasing sequence $\{t_k\}_{k \in \N} \subset (0,\infty)$ such that $t_k \longrightarrow 0$, as $k \to \infty$, and that
$$|F(\cdot,t_k)|^\eta \longrightarrow g, \quad \text{as } k \to \infty,$$
for a certain $0 \leq g \in (L^{p/\eta},\ell^{q/\eta})(\Rd)$, in the weak topology of
$(L^{p/\eta},\ell^{q/\eta})(\Rd)$. For every $x \in \Rd$ and $t>0$ we define
$$h_{x,t}(y)
:= P_t(x-y), \quad y \in \Rd.$$
By proceeding as in \eqref{eq:p7} we can see that
$h_{x,t} \in (L^{(p/\eta)'},\ell^{(q/\eta})')(\Rd)$. Hence we get
$$\lim_{k \to \infty} |F(\cdot,t_k)|^\eta * P_t(x)
=(g*P_t)(x), \quad x \in \Rd, \, t>0.$$
By  Lemma \ref{Lem:2.2} it follows that
$$ |F(x,t)|^\eta
\leq (g*P_t)(x), \quad x \in \Rd, \, t>0.$$
According to \cite[Proposition 11.12]{L+} the centered Hardy-Littlewood maximal function $\mathcal{M}$ is bounded from $(L^{p/\eta},\ell^{q/\eta})(\Rd)$ into itself. Then,
by using duality as in \cite[p. 261]{YZN} we obtain
$$\|u^*\|_{p,q}
\leq C \overline{\lim_{k \to \infty}} \| |F(\cdot,t_k)|^\eta \|^{1/\eta}_{p/\eta,q/\eta}
\leq C \sup_{t>0} \| |F(\cdot,t)| \|_{p,q}.$$
\end{proof}

Theorem \ref{Th:1.2} follows after combining Propositions \ref{Prop:2.1} and \ref{Prop:2.3}. Next, we concentrate on Theorem \ref{Th:1.1}.

\begin{proof}[Proof of Theorem \ref{Th:1.1}, $i)$]
Suppose firstly that $f \in S(\Rd)'$ satisfies the following condition:
\begin{equation}\label{eq:p8}
\text{for every } \Psi \in S(\Rd), \,
f * \Psi \in (L^{\mu p},\ell^{\mu q})(\Rd),
\text{ when } \mu \geq \mu_0,
\text{ for some } \mu_0 > \max\{1,1/p,1/q\}.
\end{equation}

We now define the Riesz transforms $R_j(f)$ of $f$ for every $j=1, \dots, d$.
Let $j=1, \dots, d$. We consider as above the kernel
$$K_j(x)
:= \frac{\G((d+1)/2)}{\pi^{(d+1)/2}} \frac{x_j}{|x|^{d+1}}, \quad x \in \Rd \setminus \{0\}.$$
We decompose $K_j$ as follows $K_j=:K_{j,1} + K_{j,2}$, where $K_{j,1}:=K_j \1_{B(0,1)}$.
The function $K_{j,1}$ defines a tempered distribution $S_j$ by
\begin{align*}
\langle S_j, \Psi \rangle
& = \lim_{\eps \to 0^+} \int_{\eps <|y|<1} K_j(y) \Psi(y) \, dy
  = \lim_{\eps \to 0^+} \int_{\eps <|y|<1} K_j(y) (\Psi(y)-\Psi(0)) \, dy \\
& =  \int_{B(0,1)} K_j(y) (\Psi(y)-\Psi(0)) \, dy , \quad \Psi \in S(\Rd).
\end{align*}
Note that the last integral is absolutely convergent for every $\Psi \in S(\Rd)$. Since the tempered distribution $S_j$ has compact support, $f*S_j$ is also a tempered distribution. We write $R_{j,1}:=f*S_j$.\\

Let $m \in \N$. We denote by $\OO_{c,m}$ the function space consisting on all those $h \in C^\infty(\Rd)$ such that, for every $k=(k_1, \dots, k_d) \in \N^d$,
$$\gamma_{m,k}(h)
:= \sup_{x \in \Rd} (1+|x|^2)^{-m}
\Big| \frac{\partial^{k_1 + \dots + k_d}}{\partial x_1^{k_1} \dots \partial x_d^{k_d}} h(x)\Big|
< \infty.$$
$\OO_{c,m}$ is endowed with the topology generated by the system of seminorms $\{\gamma_{m,k}\}_{k \in \N^d}$. Thus, $\OO_{c,m}$ is a Fr\'echet space.
According to \cite[proof of Proposition 7, p. 420]{Hor}, there exists $m_0 \in \N$ such that $f*\Psi \in \OO_{c,m_0}$, for every $\Psi \in S(\Rd)$, and the operator $\Psi \longmapsto f*\Psi$ is bounded from $S(\Rd)$ into $\OO_{c,m_0}$. Since the convergence in $\OO_{c,m_0}$ implies the pointwise convergence in $\Rd$, by using the assumption \eqref{eq:p8}, we can see that the operator $\Psi \longmapsto f*\Psi$ is closed and hence bounded from $S(\Rd)$ into
$(L^{\mu p},\ell^{\mu q})(\Rd)$, for every $\mu \geq \mu_0$.\\

By proceeding as in \eqref{eq:p7} we can see that $K_{j,2} \in (L^{r},\ell^{s})(\Rd)$ provided that $s>1$ and $r>0$. Then,
$K_{j,2} \in [(L^{\mu_0 p},\ell^{\mu_0 q})(\Rd)]'$. We define
$$\langle R_{j,2}f, \Psi \rangle
:= \int_{\Rd} (f * \Psi)(x) K_{j,2}(x) \, dx, \quad \Psi \in S(\Rd). $$
The above comments prove that $R_{j,2}f$ defines a tempered distribution.\\

We now define the Riesz transform $R_j f$ as the following tempered distribution
$$R_j f
:= R_{j,1}f + R_{j,2}f.$$
Let $\Psi \in S(\Rd)$. By taking into account the properties of the convolution of tempered distributions we can write
\begin{align*}
R_{j,1}f * \Psi
& = (f * S_j) * \Psi
= f * (S_j * \Psi)
= f * (\Psi* S_j)
= (f * \Psi)* S_j
= R_{j,1}(f * \Psi).
\end{align*}
We also have that
\begin{align*}
\langle R_{j,2}f * \Psi, \phi \rangle
& = \langle R_{j,2}(f) ,  \Psi * \phi \rangle
= \int_{\Rd} ( f * (\Psi * \phi) )(x) K_{j,2}(x) \, dx \\
& = \int_{\Rd} (( f * \Psi) * \phi)(x) K_{j,2}(x) \, dx
= \langle R_{j,2}(f * \Psi) ,  \phi \rangle, \quad \phi \in S(\Rd).
\end{align*}
Thus, we obtain
\begin{equation}\label{eq:R2}
(R_{j}f) * \Psi
=R_{j}(f * \Psi),
\end{equation}
where the equality is understood in $S(\Rd)'$.\\

Assume now that $g \in (L^{r},\ell^{s})(\Rd)$, for some $1<r,s<\infty$.
According to Young inequality on amalgams (\cite[Theorem 1.8.3]{Heil})
and \cite[(2.4)]{AF1} we deduce that
$g * \phi \in (L^{\alpha},\ell^{\beta})(\Rd)$, for every $\phi \in S(\Rd)$, $\alpha \geq r$ and $\beta \geq s$. Also by \cite[(2.5)]{AF1}, $g \in S(\Rd)'$. As above, we define
$R_j(g):=R_{j,1}(g) + R_{j,2}(g).$\\

Let $\Psi \in S(\Rd)$. We have that
$$\langle R_j(g), \Psi \rangle
= \langle g, S_j * \Psi \rangle
+ \int_{\Rd} (g*\Psi)(x) K_{j,2}(x) \, dx. $$
Since $K_{j,2} \in (L^{\alpha},\ell^{\beta})(\Rd)$, for every $\beta > 1$ and $\alpha >0$, we can write
$$\int_{\Rd} (g*\Psi)(x) K_{j,2}(x) \, dx
= [(g*\Psi)*K_{j,2}](0)
= \int_{\Rd} g(y) (\Psi*K_{j,2})(y) \, dy. $$
Also, we get
$$(S_j*\Psi)(x) + (\Psi*K_{j,2})(x)
= \lim_{\eps \to 0} \int_{|x-y|>\eps} \Psi(y) K_j(x-y) \, dy
=: \RR_j(\Psi)(x), \quad x \in \Rd.$$
Here $\RR_j$ denotes the $j$-th Riesz transform. Then,
$$\langle R_j g, \Psi \rangle
= \int_{\Rd} g(y) \RR_j(\Psi) \, dy.$$
As it is well-known $\RR_j$ is a Calder\'on-Zygmund operator associated with the kernel $K_j$. According to \cite[Theorem 7]{KNTYY} $\RR_j$ can be extended from $L^2(\Rd) \cap (L^{r},\ell^{s})(\Rd)$ as a bounded operator from $(L^{r},\ell^{s})(\Rd)$ into itself.
By taking into account that $S(\Rd)$ is a dense subspace of $(L^{r},\ell^{s})(\Rd)$, we obtain
$$\langle R_j g, \Psi \rangle
= - \int_{\Rd} \RR_j(g)(x) \Psi(x) \, dx.$$
We conclude that $R_j g = -\RR_j(g)$, in the sense of equality in $S(\Rd)'$.\\

By using \eqref{eq:R2} we deduce that $(R_j f) * \Psi \in (L^{\mu p},\ell^{\mu q})(\Rd)$,
when $\mu > \mu_0$ and $\Psi \in S(\Rd)$.\\

Suppose now that $f$ also satisfies the following property:
\begin{equation*}\label{eq:p10}
A:=
\sup_{\eps>0} \Big( \| f*\phi_\eps\|_{p,q}  + \sum_{j=1}^d \| R_j(f) * \phi_\eps \|_{p,q} \Big) < \infty.
\end{equation*}
Note that the above argument shows that, for every $j=1, \dots, d,$ and $\eps>0$,
$(R_j f)*\phi_\eps \in (L^{\mu p},\ell^{\mu q})(\Rd)$ when $\mu$ is large enough.\\

Let $\eps>0$. We define $f_\eps:=f*\phi_\eps$. Then, $f_\eps \in C^\infty(\Rd)$ and also $f_\eps \in L^\infty(\Rd)$. Indeed, suppose that $\Phi \in C_c^\infty(\Rd)$ is such that $\Phi(x)=1$, $|x|\leq 1$, and $\Phi(x)=0$, $|x|\geq 2$. Let $z \in \Rd$. We consider
$\Phi_z(x):=\Phi(x-z)$, $x \in \Rd$. We can write, for every
$x=(x_1, \dots, x_d) \in B(z,1)$,
\begin{equation}\label{eq:R3}
f_\eps(x)
= f_\eps(x) \Phi_z(x)
= \int_{-\infty}^{x_1} \dots \int_{-\infty}^{x_d} \frac{\partial^d}{\partial u_1 \dots \partial u_d} (f_\eps(u) \Phi_z(u)) \, du_1 \dots du_d.
\end{equation}
Let $\alpha=(\alpha_1, \dots, \alpha_d) \in \N^d$ and $\beta=(\beta_1, \dots, \beta_d) \in \N^d$. We denote
$$D^\alpha := \frac{\partial^{\alpha_1 + \dots + \alpha_d}}{\partial x_1^{\alpha_1} \dots \partial x_d^{\alpha_d}} \quad \text{and} \quad
D^\beta := \frac{\partial^{\beta_1 + \dots + \beta_d}}{\partial x_1^{\beta_1} \dots \partial x_d^{\beta_d}}.$$
By using H\"older inequality and \cite[(2.2)]{AF1} we deduce that
\begin{align*}
& \int_{-\infty}^{x_1} \dots \int_{-\infty}^{x_d}
|D^\alpha (f_\eps(u))| \,  |D^\beta (\Phi_z(u))| \, du_1 \dots du_d
\leq \int_{B(z,2)} |(f*D^\alpha \phi_\eps)(u)| \, du \\
& \qquad \qquad \leq C \| f * D^\alpha \phi_\eps \|_{\mu_0 p, \mu_0 q}
\Big( \int_{\Rd} \Big\| \1_{B(z,2)} \1_{B(y,1)} \Big\|_{(\mu_0p)'}^{(\mu_0 q)'} \, dy \Big)^{1/(\mu_0 q)'} \\
& \qquad \qquad \leq C \| f * D^\alpha \phi_\eps \|_{\mu_0 p, \mu_0 q}
\Big( \int_{B(z,3)} \, dy \Big)^{1/(\mu_0 q)'} 
\leq C \| f * D^\alpha \phi_\eps \|_{\mu_0 p, \mu_0 q}.
\end{align*}
From \eqref{eq:p8} and \eqref{eq:R3} it follows that $f_\eps \in L^\infty(\Rd)$.\\

We define $F_\eps:=(u_1^\eps, \dots, u_{d+1}^\eps)$, where for every $x \in \Rd$ and $t>0$,
$$
u_j^\eps(x,t):=(f_\eps * Q_t^j)(x), \quad j=1, \dots, d,
\qquad \text{and} \qquad
u_{d+1}^\eps(x,t):=(f_\eps * P_t)(x).
$$
Here $Q_t^j$ denotes the $j$-th conjugate Poisson kernel defined by
$$Q_t^j(x)
:= \frac{\G((d+1)/2)}{\pi^{(d+1)/2}} \frac{x_j}{(t^2+|x|^2)^{(d+1)/2}}, \quad x=(x_1, \dots, x_d) \in \Rd, \, t>0.$$
By \cite[Proposition 11.12]{L+} the centered Hardy-Littlewood maximal operator is bounded from $(L^{r},\ell^{s})(\Rd)$ into itself, provided that $1<r,s<\infty$. Then, by proceeding as in \cite[pp. 263--264]{YZN} we obtain that
$$\sup_{t>0} \| |F_\eps(\cdot,t)| \|_{p,q}
\leq C A.$$
Here, $C$ does not depend on $\eps$.\\

According to \cite[p. 90--91]{St1}, since $f$ is a bounded distribution, we can write
$$f*P_t
= (f*\Psi_t)*h_t + f * \Phi_t, \quad t>0,$$
where $\Psi$, $\Phi \in S(\Rd)$ and $h \in L^1(\Rd)$. By using vectorial Minkowski inequality and \cite[Proposition 2.1, (1d)]{AF1} we get
\begin{align*}
\|f*P_t\|_{\mu p, \mu q}
& \leq \int_{\Rd}  \|(f*\Psi_t)(\cdot-y)\|_{\mu p, \mu q} \, |h_t(y)| \,  dy
+ \|f*\Phi_t\|_{\mu p, \mu q} \\
& \leq   C\| f*\Psi_t\|_{\mu p, \mu q} \, \|h_t\|_1 + \|f*\Phi_t\|_{\mu p, \mu q}
< \infty, \qquad t>0 \text{ and } \mu \geq \mu_0.
\end{align*}
Also, $(f*P_t)(x)$, $x \in \Rd$ and $t>0$, is a bounded and harmonic function on $\Rdd$
(\cite[p. 90]{St1}). We can write
\begin{align*}
f_\eps * P_t
& = (f_\eps * \Psi_t)*h_t + f_\eps * \Phi_t \\
& = (f * (\phi_\eps*\Psi_t))*h_t + (f*\phi_\eps) * \Phi_t \\
& = ((f*\Psi_t) *\phi_\eps)*h_t + (f*\Phi_t ) * \phi_\eps \\
& = [((f*\Psi_t) *h_t) + f*\Phi_t]*\phi_\eps \\
& = (f * P_t)*\phi_\eps, \quad t>0.
\end{align*}
These equalities can be justified by using distributional Fourier transformation.
It follows that, for every $x \in \Rd$ and $t>0$,
$$(f_\eps * P_t)(x)
\longrightarrow (f*P_t)(x), \quad \text{as } \eps \to 0^+.$$

Let $1<r,s<\infty$. By proceeding as in the proof of \eqref{eq:R1} and using \cite[Proposition 2.1, (1d)]{AF1} we get, for every $j=1, \dots, d$, $x \in \Rd$ and $t>0$,
$$\int_{\Rd} |g(y)| \, |Q_t^j(x-y)| \, dy
\leq C \|g\|_{r,s} \|Q_{t}^j\|_{r',s'}
\leq C \|g\|_{r,s}, \quad g \in (L^{r},\ell^{s})(\Rd),$$
and by taking into account \cite[Theorem 7]{KNTYY},
$$\int_{\Rd} |\RR_j g(y)| \, |P_t(x-y)| \, dy
\leq C \|g\|_{r,s} \|P_{t}^j\|_{r',s'}
\leq C \|g\|_{r,s}, \quad g \in (L^{r},\ell^{s})(\Rd).$$
Here $C$ might depend on $t$ but it is $x$-independent. It is well-know that
$$(\Psi*Q_t^j)(x)
= (\mathcal{R}_j(\Psi)*P_t)(x), \quad x \in \Rd, \, t>0,$$
for every $\Psi \in S(\Rd)$ and $j=1, \dots, d$. Since $S(\Rd)$ is dense in
$(L^{r},\ell^{s})(\Rd)$ (\cite[(2.4)]{AF1}) we deduce that
$$(g*Q_t)(x)
= (\mathcal{R}_j(g)*P_t)(x), \quad x \in \Rd, \, t>0,$$
for every $g \in (L^{r},\ell^{s})(\Rd)$ and $j=1, \dots, d$.\\

By \eqref{eq:R2} we can write, for every $j=1, \dots, d$,
$$u_j^\eps(x,t)
= (R_j(f_\eps)*P_t)(x)
= \Big((R_j(f)*\phi_\eps)*P_t\Big)(x), \quad x \in \Rd, \, t>0.$$
Then,
$$u_j^\eps(x,t) \longrightarrow (R_j(f)*P_t)(x), \quad \text{as } \eps \to 0^+,$$
for every $x \in \Rd$, $t>0$ and $j=1, \dots d$.\\

We define $F:=(u_1,  \dots, u_{d+1})$, where, for every $x\in \Rd$ and $t>0$,
$$
u_j(x,t):=(R_j(f)*P_t)(x), \quad j=1, \dots, d,
\qquad
u_{d+1}(x,t):=(f*P_t)(x).
$$
By using Fatou's lemma we get
$$\sup_{t>0} \| |F(\cdot,t)| \|_{p,q}
\leq C A.$$
Then, Proposition \ref{Prop:2.3} implies that $u_{d+1}^* \in (L^{p},\ell^{q})(\Rd)$, that is,
$f \in \Hpq$. Furthermore, we have that
$$\|f\|_{\Hpq}
\leq C A.$$

We now prove the converse result. Suppose that $f \in \Hpq$. Let $\Psi \in S(\Rd)$. We have that $f * \Psi \in L^\infty(\Rd)$ (\cite[p. 14]{AF1}). By \cite[Theorem 3.7]{AF1},
$f * \Psi \in (L^{p},\ell^{q})(\Rd)$. If $\alpha \geq 1$ and $\|f*\Psi\|_\infty\neq 0$, we can write
\begin{align}\label{eq:R10}
\| f * \Psi \|_{\alpha p, \alpha q}
& = \Big( \int_{\Rd} \Big( \int_{B(y,1)} | (f*\Psi)(x) |^{\alpha p} \, dx \Big)^{q/p}dy \Big)^{1/\alpha q} \nonumber \\
& \leq \| f * \Psi \|_{\infty} \Big( \int_{\Rd} \Big(
\int_{B(y,1)} \Big(\frac{| (f*\Psi)(x) |}{\| f * \Psi \|_{\infty}}\Big)^{\alpha p} \, dx \Big)^{q/p} \Big)^{1/\alpha q} \nonumber \\
& \leq  \| f * \Psi \|_{\infty}^{1-1/\alpha} \| f * \Psi \|_{p,q}^{1/\alpha}.
\end{align}
Hence, $f * \Psi \in (L^{\alpha p},\ell^{\alpha q})(\Rd)$, $\alpha \geq 1$.\\

By combining the above estimates and \cite[(3.13)]{AF1} we obtain, for every $\alpha \geq 1$,
\begin{equation}\label{eq:R5}
\| f * \Psi \|_{\alpha p, \alpha q}
\leq C \|f\|_{\Hpq},
\end{equation}
where $C=C(\Psi,d,p,q)$.\\

As in the first part of this proof, we define, for every $j=1,\dots,d$, the Riesz transform $R_jf$ of $f$ by
$$R_jf:=
R_{1,j}f + R_{2,j}f.$$ 

We say that a Lebesgue measurable function $a$ defined in $\Rd$ is a $((L^p,\ell ^q)(\Rd),\infty,m)$--atom, where $m\in \mathbb{N}$, when there exists a cube $Q \subset \Rd$ such that 
\begin{itemize}
\item $supp(a)\subset Q$, 
\item $\displaystyle \int_Q a(x) \, x^\alpha \, dx=0$, for every $\alpha=(\alpha_1,\ldots,\alpha_d)\in \mathbb{N}^d$ with $\alpha_j\le m$, $j=1,\ldots,d$; and
\item $ \displaystyle 
\|a\|_\infty\le \|\1_Q\|^{-1}_{p,q}.
$
\end{itemize}
Here, $x^\alpha=\prod_{j=1}^dx_j^{\alpha_j}$ when $x=(x_1,\ldots,x_d)\in \Rd$ and $\alpha=(\alpha_1,\ldots,\alpha_d)\in \mathbb{N}^d$.\\

According to \cite[Theorem 3.11 and Proposition 3.15]{ZYYW} for $m$ large enough there exists a sequence
$\{a_n\}_{n \in \N}$ of $((L^p,\ell ^q)(\Rd),\infty,m)$--atoms in  $\Aa(p,\infty,\delta)$ and a sequence $\{\lambda_n\}_{n \in \N}\subset (0,\infty)$ such that
$$f
= \sum_{n=1}^\infty \lambda_n a_n,$$
where the convergence is understood in $S(\Rd)'$ or in $\Hpq$. We define
$$f_k
:=\sum_{n=1}^k \lambda_k a_k, \quad k \in \N.$$

Let $j=1,\dots,d$. Since $S_j$ is a distribution with compact support, we have that
$$\langle R_{j,1} f_k, \Psi \rangle
= \langle f_k, S_j * \Psi \rangle \longrightarrow \langle f,S_j*\Psi \rangle, \quad \text{as } k \to \infty,$$
for every $\Psi \in S(\Rd)$. Also by \eqref{eq:R5} we get
$$\langle R_{j,2} f_k, \Psi \rangle
= \int_{\Rd} (f_k*\Psi)(x) K_{j,2}(x)\, dx
\longrightarrow
\int_{\Rd} (f*\Psi)(x) K_{j,2}(x)\, dx
= \langle R_{j,2} f, \Psi \rangle, \quad \text{as } k \to \infty,$$
for every $\Psi \in S(\Rd)$. Hence,
$$ R_jf=\lim_{j\to\infty}R_{j} f_k
, \quad \text{in } S(\Rd)'.$$

On the other hand, according to \cite[Corollary 4.19]{AF2}, the classical Riesz transform $\RR_j$ defined by
$$\RR_jg(x)
:= \lim_{\eps \to 0^+} \int_{|x-y|>\eps} K_j(x-y) g(y) \, dy, \quad \text{a.e. } x \in \Rd,$$
for every $g \in L^2(\Rd)$, can be extended from
$L^2(\Rd) \cap \Hpq$ to $\Hpq$ as a bounded operator from $\Hpq$ into $\Hpq$. We have that
$$\RR_j f
= \lim_{k \to \infty} \RR_j f_k, \quad \text{in } \Hpq.$$
Then, by \cite[Proposition 3.8]{AF1}, we get
$$\RR_j f
= \lim_{k \to \infty} \RR_j f_k, \quad \text{in } S(\Rd)'.$$
Since, as it was proved above, $R_j f_k = \RR_j f_k$ because
$f_k \in L^2(\Rd) = (L^{2},\ell^{2})(\Rd)$, for every $k \in \N$, we conclude that
$R_j f = \RR_j f$.\\

By using \cite[Corollary 4.19]{AF2} it follows that
\begin{align*}
\|f * \phi_\eps\|_{p,q} + \sum_{j=1}^d \| R_j(f)*\phi_\eps \|_{p,q}
& \leq C \Big( \|f \|_{\Hpq} + \sum_{j=1}^d \| R_j(f) \|_{\Hpq}  \Big)
\leq C \|f \|_{\Hpq}.
\end{align*}
Thus, the proof is finished.
\end{proof}

\begin{proof}[Proof of Theorem \ref{Th:1.1}, $ii)$]
By taking into account the results established before the proof of Theorem \ref{Th:1.1}, $i)$, in order to prove Theorem \ref{Th:1.1}, $ii)$, we can proceed as in
\cite[pp. 123--124]{St1} (see also \cite[pp. 265--269]{YZN}). Note that, as it was proved before, if $f \in S(\Rd)'$ satisfies that $f * \Psi \in (L^{r},\ell^{s})(\Rd)$, for every
$\Psi \in S(\Rd)$, when $r$ and $s$ are large enough, then, for every $j=1, \dots, d$,
$R_j f \in S(\Rd)'$ and it has the same property. Then, an inductive argument shows that
$R_{j_1} \dots R_{j_m} (f) \in S(\Rd)'$ and it also satisfies the same property as $f$, for every $j_1, \dots, j_m \in \{1,\dots, d\}$ and $m \in \N$.
\end{proof}

\section{Proof of Theorem \ref{Th:1.3}}
\label{Sect:3}
Let $0<p,q<\infty$. We say that $u \in C^2(\mathbb R^{d+1}_+)$ is a temperature in $\mathbb R^{d+1}_+$, shortly $u \in T(\mathbb R^{d+1}_+)$, when
$$\frac{\partial u}{\partial t}
= \sum_{j=1}^d \frac{\partial^2 u}{\partial {x_i^2}}
\quad \text{in} \quad \mathbb R^{d+1}_+.$$
The space $T^{p,q}(\mathbb R^{d+1}_+)$ consists of all those $u \in T(\mathbb R^{d+1}_+)$ such that
$$
\|u\|_{T^{p,q}(\mathbb R^{d+1}_+)} := \sup_{t>0} \|u(\cdot,t)\|_{p,q} < \infty.
$$

\begin{Prop}\label{Prop:H1}
\quad
\begin{enumerate}
\item[$i)$] If $1 \leq p,q \leq \infty$ and $f \in (L^p,\ell^q)(\mathbb R^d)$, then
$$ u(x,t)
:= W_t(f)(x)
:= \int_{\Rd} W_t(x-y) f(y) \, dy,
\quad x \in \Rd, \, t>0,$$
is in $T^{p,q}(\mathbb R^{d+1}_+)$, and $$\|u\|_{T^{p,q}(\mathbb R^{d+1}_+)}\leq C\|f\|_{p,q}.$$
\item[$ii)$] If $1<p,q<\infty$ and $u \in T^{p,q}(\mathbb R^{d+1}_+)$, there exists $f \in (L^p,\ell^q)(\mathbb R^d)$ such that $u(x,t)=W_t(f)(x),\; x \in \mathbb R^d$ and $t>0$, and
$$\|f\|_{p,q} \leq C\|u\|_{T^{p,q}(\mathbb R^{d+1}_+)}.$$
\end{enumerate}
The constant $C>0$ in $i)$ (in $ii)$) does not depend on $f$ (on $u$).
\end{Prop}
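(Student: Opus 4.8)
The plan is to mirror the classical heat-semigroup theory for $L^p$ (as in \cite[III,\S 1]{St1}) but carried out in the amalgam scale. For part $i)$, the key point is a Young-type convolution inequality on amalgams: since $W_t \in L^1(\Rd)$ with $\|W_t\|_1 = 1$ for every $t>0$, the estimate $\|W_t * f\|_{p,q} \le \|W_t\|_1 \|f\|_{p,q} = \|f\|_{p,q}$ follows from \cite[Theorem 1.8.3]{Heil} (Young's inequality on amalgams), which gives $\|g * h\|_{p,q} \le \|g\|_1 \|h\|_{p,q}$. Taking the supremum over $t>0$ then yields $\|u\|_{T^{p,q}(\Rdd)} \le \|f\|_{p,q}$, so in fact $C=1$ works. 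That $u$ is a genuine temperature, i.e. $u \in C^2(\Rdd)$ and $\partial_t u = \Delta_x u$, is the standard differentiation-under-the-integral-sign argument: the Gaussian decay of $W_t$ and all its $x$- and $t$-derivatives, uniformly for $t$ in compact subsets of $(0,\infty)$, together with the local integrability of $f$ (every $f \in (L^p,\ell^q)(\Rd)$ is in $L^1_{loc}(\Rd)$ and has at most polynomial growth of its local averages), justify passing the heat operator inside the integral, and $(\partial_t - \Delta_x) W_t(x-y) = 0$ pointwise.

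For part $ii)$, given $u \in T^{p,q}(\Rdd)$ with $1<p,q<\infty$, the strategy is the one used already in Proposition \ref{Prop:2.1} for harmonic functions. For $\eps > 0$ set $u_\eps(x,t) := u(x,t+\eps)$ and $f_\eps(x) := u(x,\eps)$. First one shows each $f_\eps$ lies in $(L^p,\ell^q)(\Rd)$ with $\|f_\eps\|_{p,q} = \|u(\cdot,\eps)\|_{p,q} \le \|u\|_{T^{p,q}(\Rdd)}$, and that $u_\eps(x,t) = W_t(f_\eps)(x)$ for $x \in \Rd$, $t>0$; the latter identity is the uniqueness statement for the initial-value problem for the heat equation with bounded continuous data, available because $u_\eps$ is bounded on strips $\Rd \times (0,T]$ (a local boundedness estimate as in Proposition \ref{Prop:2.1}, via H\"older on amalgams applied to $|u(\cdot,t)|$ over a ball, gives $|u(x,t+\eps)| \le C \eps^{-d/\max\{p,q\}}\|u\|_{T^{p,q}}$, hence continuity up to $t=0$ after the shift), and then one invokes a Widder-type / Poisson-semigroup uniqueness theorem. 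Then, since $(L^p,\ell^q)(\Rd)$ is reflexive for $1<p,q<\infty$ (it is a reflexive Banach function space in this range, cf.\ \cite{AF1}, \cite{Heil}), the bounded family $\{f_\eps\}_{\eps>0}$ has a subsequence $f_{\eps_k} \rightharpoonup f$ weakly in $(L^p,\ell^q)(\Rd)$ as $\eps_k \to 0^+$, with $\|f\|_{p,q} \le \liminf_k \|f_{\eps_k}\|_{p,q} \le \|u\|_{T^{p,q}(\Rdd)}$. Finally, for fixed $(x,t)$ the function $y \mapsto W_t(x-y)$ lies in the dual $(L^{p'},\ell^{q'})(\Rd)$ (Gaussian decay makes it belong to every amalgam space), so testing the weak convergence against it gives $W_t(f)(x) = \lim_k W_t(f_{\eps_k})(x) = \lim_k u(x,t+\eps_k) = u(x,t)$ by continuity of $u$. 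This proves $u(x,t) = W_t(f)(x)$ with the asserted norm bound.

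The main obstacle is the uniqueness step in $ii)$, namely that a bounded temperature on $\Rd \times (0,T]$ which extends continuously to $t=0$ coincides with the Gaussian integral of its boundary values. In the harmonic case Proposition \ref{Prop:2.1} quotes \cite[Theorem 1.5, p.\ 40]{StWe}; here one needs the parabolic analogue, which is classical (Widder's uniqueness theorem for the heat equation, or the representation theorem for nonnegative/bounded solutions) but must be located and cited in a form valid in $\Rd$ rather than $\R$. A secondary technical point is verifying reflexivity of $(L^p,\ell^q)(\Rd)$ for $1<p,q<\infty$ and that the dual pairing is the integral pairing with $(L^{p'},\ell^{q'})(\Rd)$; this is standard in the amalgam literature (\cite{Hol}, \cite{FoSt}, \cite{Heil}) and can be cited. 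Everything else — Young's inequality on amalgams, the local boundedness estimate, differentiation under the integral — is routine and parallels arguments already present in Section \ref{Sect:2}.
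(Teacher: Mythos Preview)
Your proposal is correct, and for part $i)$ it is essentially the same as the paper's: the paper shows $W_t\in (L^r,\ell^s)(\Rd)$ for every $1\le r,s\le\infty$, uses H\"older to see the integral converges, invokes \cite[Theorem 1, (i)]{Fl} to conclude $W_t(f)$ is a temperature, and then gets the norm bound by the integral Minkowski inequality together with translation invariance of $\|\cdot\|_{p,q}$ --- which amounts exactly to the Young inequality $\|W_t*f\|_{p,q}\le\|W_t\|_1\|f\|_{p,q}$ you cite from \cite{Heil}.

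For part $ii)$ the routes diverge. The paper's argument is a two-line appeal to duality $[(L^{p'},\ell^{q'})(\Rd)]'=(L^p,\ell^q)(\Rd)$, Banach--Alaoglu, and \cite[Theorem 1, (ii)]{Fl}, which already packages the representation of a temperature with bounded $L^p$-type norms as a Gauss--Weierstrass integral. You instead unfold this by paralleling Proposition~\ref{Prop:2.1}: shift by $\eps$, show $u(\cdot,t+\eps)=W_t(f_\eps)$ via a Widder-type uniqueness theorem for bounded temperatures, extract a weak limit $f$ by reflexivity of $(L^p,\ell^q)(\Rd)$, and test against $W_t(x-\cdot)\in(L^{p'},\ell^{q'})(\Rd)$. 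Your approach is more self-contained (it does not rely on locating Flett's theorem in a form applicable to amalgams) at the price of needing the parabolic uniqueness theorem explicitly. One imprecision to fix: the pointwise bound you quote cannot be obtained ``as in Proposition~\ref{Prop:2.1}'' by H\"older over a spatial ball alone, because there the hypothesis was $u^*\in(L^p,\ell^q)$ whereas here you only control $\sup_t\|u(\cdot,t)\|_{p,q}$; you need a parabolic mean-value inequality over a space--time cylinder (the paper does exactly this in the proof of Proposition~\ref{Prop:H2}, citing \cite[Lemma 1.1]{GP}), and the correct exponent is $t^{-d/(2\max\{p,q\})}$ rather than $t^{-d/\max\{p,q\}}$. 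This is a minor repair and does not affect your strategy.
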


\begin{proof}
Suppose that $1\leq p,q \leq \infty$. If $1\le p,q<\infty$, we have that
\begin{align*}
\|W_t\|^q_{p,q}
& \leq  C \int_{\mathbb R^n}
\Big(\int_{B(y,1)} \Big(
\frac{e^{-|z|^2/(4t)}}{t^{d/2}} \Big)^p \, dz \Big)^{q/p} \, dy \\
&\leq \frac{C}{t^{dq/2}} \Big\{ \int_{B(0,2)} \Big(\int_{B(y,1)}
e^{-p |z|^2/(4t)} \, dz \Big)^{q/p} \, dy \\
& \qquad \qquad + \int_{\Rd \setminus B(0,2)}\Big(\int_{B(y,1)} e^{-p |z|^2/(4t)} \, dz \Big)^{q/p} \, dy \Big\}\\
&\leq \frac{C}{t^{dq/2}}
\Big\{1+ \int_{\Rd \setminus B(0,2)}
e^{-c |y|^2/t}\, dy \Big\}
< \infty, \quad t>0.
\end{align*}
Also, we get $\|W_t\|_{\infty,\infty}< \infty$, $t>0$,
$$
\|W_t\|_{p,\infty}
\leq C \Big\|
\Big\| \1_{B(y,1)}  \frac{e^{-|z|^2/(4t)}}{t^{d/2}} \Big\|_p \Big\|_\infty
\leq C \Big\| \frac{e^{-|z|^2/(4t)}}{t^{d/2}} \Big\|_p < \infty, \qquad t>0, \quad 1\le p<\infty,
$$
and
\begin{align*}
\|W_t\|^q_{\infty,q}
& \leq C \int_{\mathbb R^n}
\Big\| \1_{B(y,1)} \frac{e^{-|z|^2/(4t)}}{t^{d/2}}\Big\|^q_{\infty} \, dy \\
& \leq  C \Big\{ \int_{B(0,2)} \Big\|
\frac{e^{-|z|^2/(4t)}}{t^{d/2}}\Big\|^q_{\infty} \, dy + \int_{\Rd \setminus B(0,2)}\Big\| \1_{B(y,1)}
\frac{e^{-|z|^2/(4t)}}{t^{d/2}}\Big\|^q_{\infty} \, dy \Big\} \\
& \leq  \frac{C}{t^{dq/2}}
\Big\{ 1+ \int_{\Rd \setminus B(0,2)} e^{-c|y|^2/t} \, dy\Big\} < \infty, \qquad t>0, \quad 1\le q<\infty.
\end{align*}
According to \cite[Proposition 2.1, $(1d)$]{AF1}, $W_t(x-\cdot) \in (L^p,\ell^q)(\mathbb R^d)$,
for every $x\in \mathbb R^n$ and $t>0$.\\

By using H\"older's inequality (\cite[Proposition 11.5.4]{Heil}) and \cite[Theorem 1, $(i)$]{Fl} we deduce that $W_t(f)(x) \in T(\mathbb R^{d+1}_+)$, provided that $f \in (L^p, \ell^q)(\mathbb R^d)$.
Then, vectorial Minkowski inequality implies that
$u(x,t)= W_t(f)(x) \in T^{p,q}(\mathbb R^{d+1}_+)$ and $$\|u\|_{T^{p,q}(\mathbb R^{d+1}_+)}
\le C\|f\|_{p,q},  \quad f\in (L^p, \ell^q)(\mathbb R^d).$$

Assume now that $u \in T^{p,q}(\mathbb R^{d+1}_+)$ where $1 < p,q < \infty$. Since $[(L^{p'}, \ell^{q'})(\mathbb R^d)]'= (L^p,\ell^q)(\mathbb R^d)$
(\cite[Theorem 11.7.1]{Heil}) by using Banach-Alaouglu Theorem and \cite[Theorem 1, (ii)]{Fl} we can prove that there exists $f \in (L^p,\ell^q)(\mathbb R^d)$ such that $u(x,t)=W_t(f)(x)$, $x\in \mathbb R^d$ and $t>0$, and $$\|f\|_{p,q}
\le C\|u\|_{T^{p,q}(\mathbb R^{d+1}_+)}.$$
\end{proof}




Notice that, if $F=(u_1,\ldots,u_{d+1}) \in TCR(\R^{d+1}_+)$ then $u_j \in T(\R^{d+1}_+)$, $j=1,\ldots,d+1$.\\

As in \cite[p. 611]{GP} we define, for every $j=1,\ldots,d$,
$$S_j(x,t)
:= \mathcal{R}_j(W_t)(x), \quad x\in \Rd, \, t > 0.$$
We can write
$$
W_t(x) = \int_{\R^d}e^{-4\pi^2t|y|^2}\cos(2\pi y\cdot x) \, dy,  \quad x\in \R^d, \, t>0,
$$
and for every $j=1,\ldots,d$,
$$
S_j(x,t) = \int_{\R^d}\frac{y_j}{|y|}e^{-4\pi^2t|y|^2}\sin(2\pi y\cdot x) \, dy, \quad x\in \R^d,  \, t>0.
$$
By \cite[Proposition 2.2]{GP}, $(S_1,\ldots,S_d,W) \in TCR(\R^{d+1}_+)$.
By using \cite[Lemma 2.3]{GP} and by proceeding as in the proof of \cite[Proposition 2.4]{GP} we can see that, for every $f\in (L^p,\ell^p)(\R^d)$ with $1 \leq p,q < \infty$,
$$
\Big( f \ast S_1(x,t), \ldots, f\ast S_d(x,t),f\ast W_t(x) \Big) \in TCR(\R^{d+1}_+).
$$


Riesz transforms connect the spaces $\HHpq$ and $T^{p,q}(\R^{d+1}_+)$.

\begin{Prop}\label{Prop:H2}
\quad
\begin{enumerate}
\item[$i)$] If $(d-1)/d < p,q < \infty$ and $F=(u_1,\ldots, u_{d+1}) \in \HHpq$, then $$u:=u_{d+1} \in T^{p,q}(\R^{d+1}_+),$$ $$u_j(\cdot,t)
=\mathcal{R}_j(u(\cdot,t)), \quad t>0, \,  j=1,\ldots,d, $$ and
$$
\|u\|_{T^{p,q}(\R^{d+1}_+)} \leq C\|F\|_{\HHpq}.
$$

\item[$ii)$] If $1<p,q<\infty$,
$u \in T^{p,q}(\R^{d+1}_+)$, and
$$F(x,t)
:=\Big(\mathcal{R}_1(u(x,t)), \ldots, \mathcal{R}_d(u(x,t)),u\Big),\quad x\in \Rd \quad and \quad t>0,$$
then $F \in \HHpq$
and
$$
\| F\|_{\HHpq}
\leq C \|u\|_{T^{p,q}(\R^{d+1}_+)}.
$$
\end{enumerate}
The constant $C$ in $i)$ (in $ii)$) does not depend on $F$ (on $u$).
\end{Prop}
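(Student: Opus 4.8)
The plan is to treat the two parts in parallel, exploiting the fact that the temperature Cauchy--Riemann system $TCR(\R^{d+1}_+)$ forces each component $u_j$ to be the $j$-th Riesz transform of the ``last'' component $u_{d+1}$, and then transferring norm bounds through the boundedness of $\mathcal{R}_j$ on amalgam spaces. For part $ii)$, I would start from $u \in T^{p,q}(\R^{d+1}_+)$ with $1<p,q<\infty$. By Proposition \ref{Prop:H1}$(ii)$ there is $f\in(L^p,\ell^q)(\R^d)$ with $u(x,t)=W_t(f)(x)$ and $\|f\|_{p,q}\le C\|u\|_{T^{p,q}(\R^{d+1}_+)}$. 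Since $1<p,q<\infty$, $\mathcal{R}_j$ is bounded on $(L^p,\ell^q)(\R^d)$ (this is \cite[Theorem 7]{KNTYY}, already invoked in the proof of Theorem \ref{Th:1.1}), so $\mathcal{R}_j(f)\in(L^p,\ell^q)(\R^d)$. Using that $\mathcal{R}_j$ commutes with the heat semigroup, $\mathcal{R}_j(u(x,t)) = \mathcal{R}_j(f)\ast W_t(x) = f\ast S_j(x,t)$, where $S_j(x,t)=\mathcal{R}_j(W_t)(x)$ as defined above. By the discussion preceding the proposition, $F=(f\ast S_1,\dots,f\ast S_d, f\ast W_t)\in TCR(\R^{d+1}_+)$. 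Finally $\||F(\cdot,t)|\|_{p,q}\le \sum_{j=1}^d \|\mathcal{R}_j(f)\ast W_t\|_{p,q} + \|f\ast W_t\|_{p,q}$, and each term is $\le C\|\mathcal{R}_j(f)\|_{p,q}\le C\|f\|_{p,q}$ (resp.\ $\le C\|f\|_{p,q}$) by Proposition \ref{Prop:H1}$(i)$; taking the supremum over $t$ gives $\|F\|_{\HHpq}\le C\|u\|_{T^{p,q}(\R^{d+1}_+)}$.

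For part $i)$, suppose $F=(u_1,\dots,u_{d+1})\in\HHpq$ with $(d-1)/d<p,q<\infty$. By definition $u:=u_{d+1}\in T(\R^{d+1}_+)$ and $\sup_{t>0}\||F(\cdot,t)|\|_{p,q}<\infty$, so in particular $\sup_{t>0}\|u(\cdot,t)\|_{p,q}<\infty$, i.e.\ $u\in T^{p,q}(\R^{d+1}_+)$ and $\|u\|_{T^{p,q}(\R^{d+1}_+)}\le\|F\|_{\HHpq}$. The substantive point is the identity $u_j(\cdot,t)=\mathcal{R}_j(u(\cdot,t))$. I would establish this by a semigroup/shifting argument: for $a>0$ set $u^a(x,t):=u(x,t+a)$, and similarly $u_j^a$. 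Each $u^a(\cdot,t)$ and $u_j^a(\cdot,t)$ lies in a reasonable class (e.g.\ bounded, by an argument like the one in Proposition \ref{Prop:2.1} using Hölder on amalgams to get $|u(x,t)|\le C t^{-d/\max\{p,q\}}\|F\|_{\HHpq}$, hence $u^a$ is bounded on $\R^d\times(0,\infty)$), and $F^a:=(u_1^a,\dots,u_{d+1}^a)$ still satisfies $TCR$. One then shows $u^a(x,t)=W_t(u^a(\cdot,0^+))(x)$ with $u^a(\cdot,0^+)=u(\cdot,a)$ a bounded function, and that the $TCR$ equations propagate into the relation $u_j^a(\cdot,t)=\mathcal{R}_j(u^a(\cdot,t))$ — this is essentially \cite[Proposition 2.4 (and 2.2, 2.3)]{GP} applied on the half-space shifted by $a$, where boundedness makes everything legitimate. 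Letting $a\to 0^+$ and using continuity of convolution with $W_t$ gives $u_j(\cdot,t)=\mathcal{R}_j(u(\cdot,t))$ for all $t>0$. The norm bound $\|u\|_{T^{p,q}}\le C\|F\|_{\HHpq}$ is then immediate (in fact trivial with $C=1$).

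The main obstacle I anticipate is part $i)$, specifically justifying that the $TCR$ system genuinely forces $u_j(\cdot,t)=\mathcal{R}_j(u(\cdot,t))$ at the level of a general element of $\HHpq$, rather than just for the nice functions in $(L^p,\ell^q)(\R^d)$ covered by the cited results of \cite{GP}. The delicate points are: (a) showing a priori decay/boundedness of the components from only the $\HHpq$-bound, so that the $\partial_t^{1/2}$ operator and the Poisson/heat representations make sense; (b) passing from the pointwise differential relations $b)$, $c)$ to the integrated identity involving $\mathcal{R}_j$ — this uses that on the Fourier side $c)$ reads $\widehat{u_{d+1}}\,(2\pi i\xi_j)$-type relations that are exactly the symbol $-i\xi_j/|\xi|$ of $\mathcal{R}_j$ composed with $\partial_t^{1/2}$ applied to $W_t$, together with the fact that $\partial_t^{1/2}W_t$ and $\nabla_x W_t$ are related precisely as needed (cf.\ \cite[Lemma 2.3]{GP}); and (c) the limit $a\to0^+$, which needs the uniform amalgam bound to control $u(\cdot,a)$ and a weak-compactness argument in the reflexive space $(L^p,\ell^q)(\R^d)$ when $1<p,q<\infty$, or the subharmonicity machinery of Lemma \ref{Lem:2.2} and Proposition \ref{Prop:2.3} in the range $(d-1)/d<\min\{p,q\}\le 1$. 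I would handle the low-exponent range by the same device used there: work with $|F|^\eta$ for $\eta\in((d-1)/d,\min\{p,q\})$, which is subharmonic (indeed it satisfies a heat-type sub-mean-value inequality), deduce a pointwise domination $|F(x,t+a)|\le (W_t\ast|F(\cdot,a)|^\eta)^{1/\eta}(x)$, extract a weak limit $g$ of $|F(\cdot,t_k)|^\eta$ in the reflexive space $(L^{p/\eta},\ell^{q/\eta})(\R^d)$, and conclude. Once the identity $u_j=\mathcal{R}_j u$ is in hand, everything else is a direct quotation of \cite{KNTYY} and Proposition \ref{Prop:H1}.
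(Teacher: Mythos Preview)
Your overall architecture for both parts matches the paper's: part $ii)$ is essentially identical (represent $u=W_t(f)$ via Proposition~\ref{Prop:H1}$(ii)$, use boundedness of $\mathcal{R}_j$ on $(L^p,\ell^q)$ from \cite{KNTYY}, commute $\mathcal{R}_j$ with $W_t$, and invoke the $TCR$ property of $(f\ast S_1,\dots,f\ast S_d,f\ast W_t)$), and for part $i)$ you correctly identify the shifting argument and the pointwise decay from a parabolic mean--value inequality plus H\"older on amalgams (the paper gets $|u(x,t)|\le C\,t^{-d/(2\max\{p,q\})}\|u\|_{T^{p,q}}$; your exponent $t^{-d/\max\{p,q\}}$ is the Poisson--scale one and should be halved).

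Two points deserve adjustment. First, your citation of \cite[Proposition~2.4]{GP} for the step ``$TCR$ forces $u_j=\mathcal{R}_j u$'' is the wrong direction: in the paper's usage, \cite[Proposition~2.4]{GP} is the \emph{forward} construction (from $f$ build a $TCR$ system). The converse, which is the heart of part $i)$, is proved in the paper as a separate lemma (Lemma~\ref{Lem:3.3}): if $f,f_1,\dots,f_d\in(L^p,\ell^q)(\R^d)$ with $1<p,q<\infty$ and $(W_t f_1,\dots,W_t f_d,W_t f)\in TCR$, then $f_j=\mathcal{R}_j f$. Its proof is exactly the Fourier--side argument you sketch in your obstacle~(b): one shows $\partial_t^{1/2}\partial_t^{1/2}W_t=\partial_t W_t$, hence $\partial_t^{1/2}\big((\mathcal{R}_jf-f_j)\ast W_t\big)=0$ forces $(\mathcal{R}_jf-f_j)\ast\partial_tW_t=0$, whence $\widehat{\mathcal{R}_jf-f_j}$ is supported at the origin and so $\mathcal{R}_jf-f_j$ is a polynomial in $(L^p,\ell^q)$, i.e.\ zero. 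Second, your obstacle~(c) --- the $a\to0^+$ limit and the subharmonicity machinery of Lemma~\ref{Lem:2.2} for the range $(d-1)/d<\min\{p,q\}\le1$ --- is unnecessary. The paper sidesteps it entirely: after shifting by $t_0$, one has $u(\cdot,t_0),\,u_j(\cdot,t_0)\in L^\infty\cap(L^p,\ell^q)\subset(L^{\alpha p},\ell^{\alpha q})$ for every $\alpha\ge1$ (as in \eqref{eq:R10}); choosing $\alpha_0>\max\{1,1/p,1/q\}$ puts the boundary data in a reflexive amalgam space where Lemma~\ref{Lem:3.3} applies directly. This yields $u_j(\cdot,t+t_0)=\mathcal{R}_j(u(\cdot,t+t_0))$ for all $t>0$, and since $t_0>0$ was arbitrary every $s>0$ is covered without any limiting procedure.
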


\begin{proof}[Proof of Proposition \ref{Prop:H2}, $i)$]
Let $(d-1)/d< p,q< \infty$ and suppose that
$F:=(u_1,\ldots,u_{d+1}) \in \HHpq$. We write $u:=u_{d+1}$. We have that $u \in T^{p,q}(\R^{d+1}_+)$. For $x=(x_1,\ldots,x_d)\in \R^{d}$ and $t>0$, consider the rectangle
$$R
:=\prod^d_{j=1}\left(x_j-\frac{\sqrt{t}}{2\sqrt 2}, x_j+\frac{\sqrt t}{2\sqrt 2}\right).$$

Assume first that $q \leq p$. According to \cite[Lemma 1.1 and Remark, p. 610]{GP} we have that
$$
|u(x,t)|^q
\leq  \frac{C}{t^{1+d/2}}
\int_{t/2}^t \int_R |u(y,s)|^q \, dy \, ds.
$$
By using H\"older inequality (\cite[Proposition 11.5.4]{Heil}) we get
\begin{align*}
|u(x,t)|^q
&\leq  \frac{C}{t^{1+d/2}}
\int_{t/2}^t \|u(\cdot,s)\|^q_{p,q}
\|\1_R\|_{(p/q)', \infty} \, ds
 \leq  C t^{-1-d/2+d/(2(p/q)')+1} \|u\|^q_{T^{p,q}(\R^{d+1}_+)}  \\
& \leq  C t^{-qd/(2p)} \|u\|^q_{T^{p,q}(\R^{d+1}_+)}.
\end{align*}

Suppose now that $p \leq q$. By proceeding as before we get
\begin{align*}
|u(x,t)|^p
& \leq  \frac{C}{t^{1+d/2}}
\int_{t/2}^t \int_{R} |u(y,s)|^p \, dy \, ds
\leq  \frac{C}{t^{1+d/2}}
\int_{t/2}^t \|u(\cdot,s)\|^p_{p,q}\|\1_R\|_{\infty, (q/p)'} \, ds\\
&\leq  C t^{-pd/(2q)}\|u\|^p_{T^{p,q}(\R_+^{d+1})}.
\end{align*}
We obtain
\begin{equation}\label{H1}
|u(x,t)| \leq  C t^{-d/(2\max\{p,q\})}\|u\|_{T^{p,q}(\R_+^{d+1})}.
\end{equation}

Let $t_0 \in (0,\infty)$. We define, for every $x \in \R^{d}$ and $t\geq 0$,
$$u_0(x,t):=u(x,t+t_0) \qquad \text{and} \qquad
u_{j,0}(x,t):=u_j(x,t+t_0), \quad j=1, \dots, d.$$
According to (\ref{H1}),
$$u_0(\cdot,0) \in L^\infty(\R^d)
\qquad \text{and} \qquad
u_{j,0}(\cdot,0) \in L^\infty(\R^d), \quad j=1, \dots, d.$$
Also,
$$u_0(\cdot,0) \in (L^p,\ell^q)(\R^d)
\qquad \text{and} \qquad
u_{j,0}(\cdot,0) \in (L^p,\ell^q)(\R^d), \quad j=1, \dots, d.$$
By proceeding as in \eqref{eq:R10} we obtain that,
for every $\alpha \geq 1$,
$$u_0(\cdot,0) \in (L^{\alpha p}, \ell^{\alpha q})(\R^d)
\qquad \text{and} \qquad
u_{j,0}(\cdot,0) \in (L^{\alpha p}, \ell^{\alpha q})(\R^d), \quad j=1, \dots, d.$$

By \eqref{H1},
$$ \sup_{t>0}\|u_0(\cdot,t)\|_\infty < \infty
\qquad \text{and} \qquad
\sup_{t>0}\|u_{j,0}(\cdot,t)\|_\infty<\infty, \quad j=1, \dots, d.$$
Then, according to \cite[Theorem 4 (i)]{Fl}, for
$x \in \R^d$ and $t>0$,
$$u_0(x,t)=W_t(u_0(\cdot,0))(x)
\qquad \text{and} \qquad
u_{j,0}(x,t)=W_t(u_j(\cdot,0))(x), \quad j=1, \dots, d.$$
We take $\alpha_0> \max\{1,1/p, 1/q\}$. Since $\alpha_0p>1$ and $\alpha_0q>1$, by using Lemma \ref{Lem:3.3} below we get
$$\mathcal{R}_j(u_0(\cdot,0)=u_{j,0}(\cdot,0),
\quad j=1, \ldots,d.$$
Then, for every $t>0$, there exists $E \subset \R^d$ such that $|E|=0$ and
$$
u_{j,0}(x,t)
=u_j(x,t+t_0)
= \mathcal{R}_j(u_0(\cdot,t)(x)
= \mathcal{R}_j(u(\cdot,t+t_0))(x), \quad x\in \R^d\setminus E,
$$
for every $j=1\ldots,d$. We obtain that, for every $s>0$,
$$
u_j(x,s)= \mathcal{R}_j(u(\cdot,s))(x), \quad \text{a.e. } x \in \R^d,
$$
for each $j=1,\ldots,d$.
\end{proof}

\begin{proof}[Proof of Proposition \ref{Prop:H2}, $ii)$]
Suppose that $u \in T^{p,q}(\R_+^{d+1})$, where $1<p,q<\infty$. By \cite[Theorem 7]{KNTYY}, $R_j$ defines a bounded operator from $(L^p,\ell^p)(\R^d)$ into itself, for every $j=1,\ldots,d$. Hence, $$ \sup_{t>0}\|\mathcal{R}_j(u(\cdot,t))\|_{p,q}
\leq C \sup_{t>0}\|u(\cdot,t)\|_{p,q}.$$
According to Proposition \ref{Prop:H1}, $ii)$, there exists $f\in (L^p,\ell^q) (\R^{d+1}_+)$ such that $$u(x,t) = W_t(f)(x), \quad x \in \R^d, \, t>0.$$
We have that, for every $j=1,\ldots,d$, $$\mathcal{R}_j(u(\cdot,t))= W_t(\mathcal{R}_jf), \quad t>0, \, j=1, \dots,d.$$
It follows that $$\Big(\mathcal{R}_1(u(\cdot,t)),\ldots,\mathcal{R}_d(u(\cdot,t)), u(\cdot,t)\Big) \in \HHpq.$$
\end{proof}

We now establish the result that we have previously used in the proof of Proposition \ref{Prop:H2}, $i)$.

\begin{Lem}\label{Lem:3.3}
Let $1<p,q<\infty$. Suppose that $f,f_1,\ldots,f_d \in (L^p,\ell^q)(\R^d)$. We define, for $x \in \R^d$ and $t>0$,
$$u(x,t):=W_t(f)(x)
\qquad \text{and} \qquad
u_j(x,t):=W_t(f_j)(x), \quad j=1, \dots, d.$$
If $(u_1, \ldots,u_d,u)\in TCR(\R^{d+1}_+)$, then $f_j=\mathcal{R}_jf$, $j=1,\ldots,d$.
\end{Lem}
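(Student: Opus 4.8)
The plan is to take the Fourier transform in the space variable $x$ and read the relation between $f$ and the $f_j$'s off the third family of temperature Cauchy--Riemann equations. Since $1<p,q<\infty$, both $f$ and each $f_j$ lie in $(L^p,\ell^q)(\R^d)\subset S(\R^d)'$ (\cite[(2.5)]{AF1}), and, because $W_t\in S(\R^d)$ for every $t>0$, the functions $u(\cdot,t)=W_t\ast f$ and $u_j(\cdot,t)=W_t\ast f_j$ are smooth with tempered Fourier transforms $\widehat{u(\cdot,t)}=e^{-4\pi^{2}t|\cdot|^{2}}\,\widehat f$ and $\widehat{u_j(\cdot,t)}=e^{-4\pi^{2}t|\cdot|^{2}}\,\widehat{f_j}$, where the Fourier transform is normalised so that $\widehat{W_t}(\xi)=e^{-4\pi^{2}t|\xi|^{2}}$, in accordance with the integral representation of $W_t$ recalled above. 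The one auxiliary computation needed is that of the Weyl fractional derivative of the Gaussian factor: from the definition of $\partial_t^{1/2}$ and $\int_0^{\infty}r^{-1/2}e^{-ar}\,dr=\sqrt{\pi/a}$ one obtains, for each fixed $\xi\neq 0$,
\[
\partial_t^{1/2}\bigl(e^{-4\pi^{2}t|\xi|^{2}}\bigr)=-2\pi i\,|\xi|\,e^{-4\pi^{2}t|\xi|^{2}}.
\]

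Next I would apply the $x$-Fourier transform to equation $c)$, i.e. to $\partial u_{d+1}/\partial x_j=-i\,\partial_t^{1/2}u_j$ with $u_{d+1}=u$; testing against Schwartz functions and using the smoothness and rapid spatial decay of $W_t\ast f_j$ to justify interchanging the $x$-integration with the $t$-integral defining $\partial_t^{1/2}$, the left-hand side becomes $2\pi i\,\xi_j\,e^{-4\pi^{2}t|\xi|^{2}}\widehat f$ and the right-hand side becomes $-i\cdot(-2\pi i|\xi|)\,e^{-4\pi^{2}t|\xi|^{2}}\widehat{f_j}=-2\pi|\xi|\,e^{-4\pi^{2}t|\xi|^{2}}\widehat{f_j}$. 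Cancelling the never-vanishing factor $2\pi e^{-4\pi^{2}t|\xi|^{2}}$ (legitimate against test functions supported away from the origin) gives
\[
\widehat{f_j}(\xi)=-i\,\frac{\xi_j}{|\xi|}\,\widehat f(\xi)\qquad\text{on }\R^d\setminus\{0\}.
\]
Since $\mathcal{R}_j$ is bounded on $(L^p,\ell^q)(\R^d)$ (\cite[Theorem 7]{KNTYY}), $\mathcal{R}_j f\in(L^p,\ell^q)(\R^d)$, and by density of $S(\R^d)$ in $(L^p,\ell^q)(\R^d)$ (\cite[(2.4)]{AF1}) together with the classical Fourier-multiplier description of $\mathcal{R}_j$ on $S(\R^d)$ one also has $\widehat{\mathcal{R}_j f}(\xi)=-i(\xi_j/|\xi|)\widehat f(\xi)$ on $\R^d\setminus\{0\}$. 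Hence $\widehat{f_j-\mathcal{R}_j f}$ is supported at $\{0\}$, so $f_j-\mathcal{R}_j f$ is a polynomial; a polynomial that belongs to $(L^p,\ell^q)(\R^d)$ must vanish, because the amalgam norm of any nonzero polynomial is infinite. Therefore $f_j=\mathcal{R}_j f$ for $j=1,\dots,d$.

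The computations are short; the step that needs the most care is the passage from the off-origin multiplier identity to the distributional identity $f_j=\mathcal{R}_j f$ (the spectrum-at-a-point argument). An alternative route avoids the origin entirely: by the discussion preceding the lemma, $\bigl(W_t(\mathcal{R}_1 f),\dots,W_t(\mathcal{R}_d f),W_t(f)\bigr)\in TCR(\R^{d+1}_+)$, so the difference $(v_1,\dots,v_d,0)$ with $v_j:=u_j-W_t(\mathcal{R}_j f)=W_t(f_j-\mathcal{R}_j f)$ again satisfies $TCR(\R^{d+1}_+)$; equation $c)$ then forces $\partial_t^{1/2}v_j=0$, and since $v_j(x,\cdot)$ is smooth on $(0,\infty)$ and $v_j(x,t)\to 0$ as $t\to\infty$ (by an estimate of the type $\eqref{H1}$), the semigroup property $\partial_t^{1/2}\partial_t^{1/2}=\partial_t$ shows $v_j$ is independent of $t$, hence identically zero; letting $t\to 0^{+}$ and using $W_t(g)\to g$ in $(L^p,\ell^q)(\R^d)$ yields $f_j-\mathcal{R}_j f=0$. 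I would present the Fourier argument as the main proof and record this variant as a remark.
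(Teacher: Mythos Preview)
Your overall strategy is sound, and your ``alternative route'' is essentially the paper's own proof: the paper subtracts the known TCR system $\bigl(W_t(\mathcal R_1 f),\dots,W_t(\mathcal R_d f),W_t f\bigr)$, obtains $\partial_t^{1/2}(g\ast W_t)=0$ for $g:=f_j-\mathcal R_j f$, applies $\partial_t^{1/2}$ once more (after verifying $\partial_t^{1/2}\partial_t^{1/2}W_t=\partial_t W_t$) to get $g\ast\partial_t W_t=0$, Fourier-transforms to $\widehat g(\xi)|\xi|^2e^{-t|\xi|^2}=0$, and finishes with the supported-at-the-origin $\Rightarrow$ polynomial $\Rightarrow$ zero step you also use. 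Your main route is a bit more direct in that it Fourier-transforms equation~(c) itself and reads off the multiplier identity without the double-$\partial_t^{1/2}$ detour; both approaches end in the same spectrum-at-a-point argument.

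There is, however, one genuine slip in your justification of the interchange: the claim that $W_t\ast f_j$ has ``rapid spatial decay'' is false. For $f_j\in(L^p,\ell^q)(\R^d)$ one only obtains $W_t\ast f_j\in L^\infty(\R^d)$ (cf.\ the bound \eqref{H1}); there is no Schwartz-type decay in $x$ in general. The Fubini step you need is still true, but the correct reason is H\"older in the amalgam norms: $\bigl|\partial_s(W_s\ast f_j)(x)\bigr|\le\|f_j\|_{p,q}\,\|\partial_s W_s\|_{p',q'}$ uniformly in $x$, combined with $\int_t^\infty(s-t)^{-1/2}\|\partial_s W_s\|_{p',q'}\,ds<\infty$, which follows from the pointwise bound $|\partial_s W_s(x)|\le C\min\{s^{-1-d/2},|x|^{-d-2}\}$. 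This is precisely the computation the paper carries out to prove $\partial_t^{1/2}(g\ast W_t)=g\ast\partial_t^{1/2}W_t$; once you have that, your Fourier identity on the right-hand side of (c) reads $\widehat{f_j}\cdot\widehat{\partial_t^{1/2}W_t}$ and the rest of your calculation goes through. In your alternative route, the ``semigroup property'' $\partial_t^{1/2}\partial_t^{1/2}=\partial_t$ also requires this same H\"older estimate plus the kernel identity $\partial_t^{1/2}\partial_t^{1/2}W_t=\partial_t W_t$ (which the paper checks via Fourier transform), so you should not invoke it as a black box.
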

\begin{proof}
We have that
\begin{equation}\label{L1}
\Big|\frac{\partial W_t(x)}{\partial t}\Big|
\leq C\frac{e^{-c |x|^2/t}}{t^{1+d/2}} \leq C \min \left\{\frac{1}{t^{1+d/2}}, \frac{1}{|x|^{d+2}}\right\}, \quad x\in \R^d, \, t>0.
\end{equation}
Then, according to \cite[Lemma 2]{KS} we deduce that
$$
|\partial_t^{1/2} W_t(x)|
\leq C \min \Big\{\frac{1}{t^{(d+1)/2}}, \frac{1}{|x|^{d+1}}\Big\}, \quad x\in \R^d, \, t>0.
$$
Also, we get
\begin{align*}
\int_0^\infty s^{-1/2} \int_{\R^d}
\Big|\frac{\partial W_{t+s}(y)}{\partial s}\Big| \, dy \, ds
&\leq  C \int_0^\infty s^{-1/2}\Big(\int_{B(0,\sqrt{s+t})} \frac{dy}{(t+s)^{(d+2)/2}} \\
& \qquad \qquad \qquad + \int_{\Rd \setminus B(0,\sqrt{s+t})} \frac{dy}{|y|^{d+2}} \Big)ds\\
&\leq  C\int_0^\infty \frac{ds}{s^{1/2}(t+s)} < \infty, \quad t>0.
\end{align*}
We can write
$$
\widehat{\partial_t ^{1/2}W_t}(x)
=\partial_t^{1/2}\widehat{W_t}(x)
= i|x|e^{-t|x|^2}, \quad x\in \R^d, \, t>0.
$$

On the order hand, we have that
$$
\frac{ \partial (\partial_t^{1/2}W_t(x))}{\partial t}
=\partial_t^{1/2} \Big(\frac{\partial W_t(x)}{\partial t}\Big), \quad x\in \R^d, \, t>0,
$$
because, since
$$\Big| \frac{\partial^2 W_t(x)}{\partial t^2}\Big|
\leq C \min \Big\{\frac{1}{t^{(d+4)/2}}, \frac{1}{|x|^{d+4}}\Big\}, \quad x\in \R^d, \, t>0,$$
it follows that
$$
\int_0^\infty s^{-1/2}
\Big| \frac{\partial^2 W_{t+s}(x)}{\partial s^2}\Big| \, ds < \infty, \quad x\in \Rd, \quad t>0.
$$
By using again \cite[Lemma 2]{KS} we obtain
\begin{equation}\label{L2}
\Big|\partial^{1/2}_t
\Big(\frac{\partial W_t(x)}{\partial t}\Big)\Big|
\leq C \min \Big\{\frac{1}{t^{(d+3)/2}}, \frac{1}{|x|^{d+3}}\Big\}, \quad x\in \R^d, \, t>0.
\end{equation}
By proceeding as above we deduce that,
for any  $x\in \R^d$ and $t>0$,
$$
\widehat{(\,\,\,\partial_t^{1/2} \partial_t^{1/2}W_t\,)}(x)
= \partial_t^{1/2}\widehat{(\partial_t^{1/2}W_t)}(x)
= \partial_t^{1/2} \partial_t^{1/2}\widehat{W_t}(x)
= -|x|^2 e^{-t|x|^2}
= \widehat{\partial_t W_t}(x).
$$
Hence,
$$\partial_t^{1/2}\partial_t^{1/2}W_t(x)
= \frac{\partial W_t(x)}{\partial t}, \quad x\in \R^d, \, t>0.$$

Let $j=1,\ldots,d$. We define
$$v_j(x,t)
:= \Big(\mathcal{R}_j(f)\ast W_t \Big)(x), \quad x\in \R^d, \, t>0.$$

We have that
$$\partial_t^{1/2}\Big((\mathcal{R}_j(f)-f_j)\ast W_t\Big)(x)=0,
\quad x \in \R^d, \, t>0,$$
because $(v_1,\dots,v_d,u) \in TCR(\R^{d+1}_+)$ and $(u_1,\ldots, u_d,u) \in TCR(\R^{d+1}_+)$.\\

It remains to show that if $g \in (L^p,\ell^q)(\R^d)$ such that $\partial_t^{1/2}(g \ast W_t)(x)=0$, $x\in \Rd$ and $t>0$, then $g= 0$.
Indeed, take $g \in (L^p, \ell^q)(\R^d)$ verifying $\partial^{1/2}_t(g\ast W_t)(x)=0$. By using H\"older inequality, \cite[Proposition 2.1]{AF1} and (\ref{L1}) we get
\begin{align*}
&\int_0^\infty s^{-1/2} \int_{\R^d} |g(x-y)| \,
\Big| \frac{\partial W_{t+s}(y)}{\partial s}\Big| \, dy \,  ds
\leq C \int_0^\infty s^{-1/2}\|g\|_{p,q}
\Big\| \frac{\partial W_{t+s}}{\partial s} \Big\|_{p',q'} \, ds\\
& \qquad \leq C \int_0^\infty s^{-1/2}
\Big[\int_{\R^d}\Big(\int_{B(y,1)}\min\Big\{\frac{1}{(t+s)^{(d+2)/2}}, \frac{1}{|z|^{d+2}}\Big\}^{p'}dz
\Big)^{q'/p'}dy\Big]^{1/q'} \, ds\\
& \qquad \leq C \int_0^\infty s^{-1/2}
\Big\{
\int_{B(0,\sqrt{t+s}+2)}\Big(\int_{B(y,1)}
\frac{1}{(t+s)^{(d+2)p'/2}} \, dz\Big)^{q'/p'} \, dy \\
& \qquad \qquad + \int_{\Rd \setminus B(0,\sqrt{t+s}+2)}\Big(\int_{B(y,1)} \frac{1}{|z|^{(d+2)p'}}dz\Big)^{q'/p'} \, dy \Big\}^{1/q'} \, ds\\
& \qquad \leq C \int_0^\infty
s^{-1/2}\Big\{ \frac{(\sqrt{s+t}+2)^d}{(t+s)^{(d+2)q'/2}}
+ \int_{\Rd \setminus B(0,\sqrt{t+s}+2)} \frac{1}{|y|^{(d+2)q'}} \, dy
\Big\}^{1/q'} \, ds\\
& \qquad \leq C \Big(\int_0^\infty
\frac{s^{-1/2}}{(t+s)^{(d+2)/2-d/(2q')}} \, ds+\int_0^\infty
\frac{s^{-1/2}}{(t+s)^{(d+2)/2}}ds\Big) < \infty, \quad x \in \R^d, \, t>0.
\end{align*}
Hence,
$$\partial_t^{1/2}(g \ast W_t)
= g \ast \partial_t^{1/2}W_t, \quad t>0.$$
By using (\ref{L2}) and proceeding in a similar way we get
$$
\partial_t^{1/2}
\partial_t^{1/2}(g\ast W_t)
= g \ast
\partial_t^{1/2}
\partial_t^{1/2} W_t
= g \ast \frac{ \partial W_t}{\partial t} =0, \quad t>0.
$$
Since $(L^p,\ell^q)(\Rd) \subset S(\Rd)'$ (\cite[(2.5)]{AF1}) and $\partial W_t / \partial t \in S(\R^d)$, by applying the interchange formula we obtain
$$
\widehat{g \ast \frac{\partial W_t}{\partial t}}
= \widehat{g} \, \widehat{\frac{\partial W_t}{\partial t}}
=- \widehat{g} \, |x|^2e^{-t|x|^2}=0,\;\; t>0.
$$
Here the Fourier transformation and the equalities are understood in $S(\R^d)'$.\\

For every $\psi \in C_c^\infty(\R^d)$ such that $0 \notin \supp \psi$ we have that
$$
\langle\widehat{g},\psi \rangle = \langle\widehat{g},|x|^2e^{-|x|^2}e^{|x|^2}|x|^{-2}\psi \rangle=0,
$$
because $e^{|x|^2}|x|^{-2}\psi \in S(\R^d)$. Then, $\supp \widehat{g} = \{0\}$. According to
\cite[Theorem 6.25]{Rud}, there exists $n \in \N$ and $c_k \in \C$, $k=(k_1,\dots,k_d) \in \N^d$ such that $k_\ell \leq n$, $\ell=1,2,\ldots,d$, and
$$
\widehat{g} =
\sum_{\substack{k=(k_1,\ldots, k_d)\in \N^d \\k_\ell\leq n, \, \ell=1,\dots,d}} c_k \delta^{(k)}.
$$
Then, $g$ is a polynomial of degree at most $n$. Since $g \in (L^p,\ell^q)(\R^d)$ we deduce $g=0$.
\end{proof}

An immediate consequence of Propositions \ref{Prop:H1} and \ref{Prop:H2} is the following.
\begin{Cor} Let $1<p,q<\infty$. Then, the following assertions are equivalent.
\begin{enumerate}
\item[(i)] $F=(u_1,\dots,u_d,u) \in \HHpq$.
\item[(ii)] There exists $f\in (L^p,\ell^q)(\R^{d+1}_+)$ such that, for every
$x\in \R^d$ and $t>0$,
$$u(x,t)=W_t(f)(x)
\qquad \text{and} \qquad
u_j(x,t)=W_t(\mathcal{R}_j(f))(x), \quad j=1, \dots, d.$$
\end{enumerate}
Furthermore,
$$
\frac{1}{C}\|F\|_{\HHpq}
\leq \|f\|_{p,q}
\leq C\|F\|_{\HHpq},
$$
where $C>0$ does not depend on $F$.
\end{Cor}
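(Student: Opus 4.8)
The plan is to chain Propositions \ref{Prop:H1} and \ref{Prop:H2}, since the hypothesis $1<p,q<\infty$ places us inside the range of validity of both (in particular $(d-1)/d<1<p,q$, so Proposition \ref{Prop:H2}, $i)$ applies). Two elementary facts will be used repeatedly. First, the Riesz transforms commute with the heat semigroup: for $h\in(L^p,\ell^q)(\R^d)$ and $t>0$ one has $\mathcal{R}_j(W_t(h))=W_t(\mathcal{R}_j(h))$, which follows by writing $W_t(h)=h\ast W_t$ and noting that $\mathcal{R}_j$ is a Fourier multiplier, hence commutes with convolution; this is exactly the identity already invoked in the proof of Proposition \ref{Prop:H2}, $ii)$. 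Second, the representation $u(x,t)=W_t(f)(x)$ determines $f$ uniquely, since $\widehat{W_t}(\xi)=e^{-4\pi^2t|\xi|^2}$ never vanishes, so the $f$ produced below is the same in both implications.

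For $(i)\Rightarrow(ii)$: assume $F=(u_1,\dots,u_d,u)\in\HHpq$. Proposition \ref{Prop:H2}, $i)$ gives $u=u_{d+1}\in T^{p,q}(\R^{d+1}_+)$, the identities $u_j(\cdot,t)=\mathcal{R}_j(u(\cdot,t))$ for $t>0$ and $j=1,\dots,d$, and $\|u\|_{T^{p,q}(\R^{d+1}_+)}\le C\|F\|_{\HHpq}$. Applying Proposition \ref{Prop:H1}, $ii)$ to $u$ yields $f\in(L^p,\ell^q)(\R^d)$ with $u(x,t)=W_t(f)(x)$ and $\|f\|_{p,q}\le C\|u\|_{T^{p,q}(\R^{d+1}_+)}$. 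Then $u_j(\cdot,t)=\mathcal{R}_j(u(\cdot,t))=\mathcal{R}_j(W_t(f))=W_t(\mathcal{R}_j(f))$, which is $(ii)$ (the a.e.\ equality of Proposition \ref{Prop:H2}, $i)$ upgrades to everywhere since both sides are smooth in $x$), and $\|f\|_{p,q}\le C\|F\|_{\HHpq}$. For $(ii)\Rightarrow(i)$: given $f\in(L^p,\ell^q)(\R^d)$ with the stated representation, Proposition \ref{Prop:H1}, $i)$ gives $u=W_t(f)\in T^{p,q}(\R^{d+1}_+)$ with $\|u\|_{T^{p,q}(\R^{d+1}_+)}\le C\|f\|_{p,q}$, and Proposition \ref{Prop:H2}, $ii)$ gives $(\mathcal{R}_1(u(\cdot,t)),\dots,\mathcal{R}_d(u(\cdot,t)),u)\in\HHpq$ with norm at most $C\|u\|_{T^{p,q}(\R^{d+1}_+)}$; since $\mathcal{R}_j(u(\cdot,t))=W_t(\mathcal{R}_j(f))=u_j(\cdot,t)$, this vector is precisely $F$, so $\|F\|_{\HHpq}\le C\|f\|_{p,q}$. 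Combining the two chains of estimates gives $\frac1C\|F\|_{\HHpq}\le\|f\|_{p,q}\le C\|F\|_{\HHpq}$ with $C=C(d,p,q)$ independent of $F$.

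There is no real obstacle here: the corollary is a bookkeeping consequence of the two propositions. The only points deserving a line of justification are the commutation $\mathcal{R}_jW_t=W_t\mathcal{R}_j$ on $(L^p,\ell^q)(\R^d)$ and the uniqueness of the representing function $f$, both of which are immediate from the Fourier side; everything else is a direct citation of Propositions \ref{Prop:H1} and \ref{Prop:H2}.
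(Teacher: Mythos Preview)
Your argument is correct and is exactly the approach the paper takes: the corollary is stated there as ``an immediate consequence of Propositions~\ref{Prop:H1} and~\ref{Prop:H2}'' with no further details, and you have simply spelled out the obvious chaining of those two results together with the commutation $\mathcal{R}_jW_t=W_t\mathcal{R}_j$ already used inside the proof of Proposition~\ref{Prop:H2},~$ii)$.
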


We now establish the boundary behavior of the elements of $T^{p,q}(\R^{d+1}_+)$. Next result completes Proposition \ref{Prop:H1}.

\begin{Prop}\label{H3}
Let $0<p,q<\infty$. If $u \in T^{p,q}(\R^{d+1}_+)$, there exists $f \in \Ss(\R^d)'$ such that
$u(x,t)=(f \ast W_t)(x)$.
Furthermore,
$$ \lim_{t\rightarrow 0^+}u(\cdot,t)=f,
\quad \text{in } S(\R^d)'.$$
\end{Prop}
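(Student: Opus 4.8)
The plan is to follow the classical scheme for the Poisson semigroup (\cite[III, \S 4]{St1}), already carried out in the harmonic case in the proof of Proposition \ref{Prop:2.1}: exhibit $f$ as a weak-$*$ limit in $\Ss(\R^d)'$ of the slices $u(\cdot,t_0)$ as $t_0\to 0^+$, after first establishing a reproducing identity at a fixed positive height. First I would record pointwise control of $u$. Arguing as in the derivation of \eqref{H1} --- which only uses the sub-mean value inequality for temperatures (\cite[Lemma 1.1 and Remark, p. 610]{GP}) together with H\"older's inequality on amalgams, and so is valid for all $0<p,q<\infty$ --- one gets $|u(x,t)|\le C\,t^{-d/(2\max\{p,q\})}\|u\|_{T^{p,q}(\R^{d+1}_+)}$ for $x\in\R^d$, $t>0$. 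Hence, for each $t_0>0$, the slice $g_{t_0}:=u(\cdot,t_0)$ belongs to $(L^p,\ell^q)(\R^d)\cap L^\infty(\R^d)$ with $\|g_{t_0}\|_{p,q}\le \|u\|_{T^{p,q}(\R^{d+1}_+)}$, and $v(x,s):=u(x,s+t_0)$ is a temperature on $\R^{d+1}_+$, continuous up to $\{s=0\}$, with $v(\cdot,0)=g_{t_0}$ and $\sup_{s\ge0}\|v(\cdot,s)\|_\infty<\infty$.

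The bound $\|g_{t_0}\|_{p,q}\le\|u\|_{T^{p,q}(\R^{d+1}_+)}$ is uniform in $t_0$, and this is what lets me treat $\{g_{t_0}\}$ as a bounded family of tempered distributions: using the equivalence between $\|\cdot\|_{p,q}$ and the discrete amalgam (quasi-)norm, a Jensen estimate on unit cubes, and the rapid decay of $\phi\in\Ss(\R^d)$, one produces a single continuous seminorm $\rho$ on $\Ss(\R^d)$ with $|\langle g_{t_0},\phi\rangle|\le C\,\rho(\phi)\,\|u\|_{T^{p,q}(\R^{d+1}_+)}$ for every $\phi\in\Ss(\R^d)$ and every $0<t_0\le 1$. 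Thus $\{g_{t_0}\}_{0<t_0\le1}$ is equicontinuous, hence relatively weak-$*$ sequentially compact, in $\Ss(\R^d)'$.

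Next I would prove the reproducing identity $u(x,s+t_0)=(W_s\ast g_{t_0})(x)$ for all $x\in\R^d$ and $s,t_0>0$: both sides are bounded temperatures on $\overline{\R^{d+1}_+}$ with the same bounded continuous initial value $g_{t_0}\in(L^p,\ell^q)(\R^d)\cap L^\infty(\R^d)$, hence they coincide by uniqueness of bounded solutions of the heat equation --- the same ingredient (\cite[Theorem 4 (i)]{Fl}) used in the proof of Proposition \ref{Prop:H2}. Now pick $t_k\downarrow 0$ and $f\in\Ss(\R^d)'$ with $g_{t_k}\to f$ weak-$*$. For fixed $x$ and $t>0$, the reproducing identity gives $u(x,t+t_k)=\langle g_{t_k},W_t(x-\cdot)\rangle\to\langle f,W_t(x-\cdot)\rangle=(f\ast W_t)(x)$, while $u(x,t+t_k)\to u(x,t)$ by continuity of $u$; therefore $u(x,t)=(f\ast W_t)(x)$ for all $x\in\R^d$ and $t>0$. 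Finally, for $\phi\in\Ss(\R^d)$ we have $\langle u(\cdot,t),\phi\rangle=\langle f,W_t\ast\phi\rangle\to\langle f,\phi\rangle$ as $t\to0^+$, because $W_t\ast\phi\to\phi$ in $\Ss(\R^d)$; this yields $\lim_{t\to0^+}u(\cdot,t)=f$ in $\Ss(\R^d)'$ and, incidentally, shows that $f$ does not depend on the chosen sequence $\{t_k\}$.

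The main obstacle is the quasi-Banach range $\min\{p,q\}<1$: there $(L^p,\ell^q)(\R^d)$ is not the dual of a Banach space, so the uniform estimate of $\langle g_{t_0},\phi\rangle$ cannot be read off from duality and must be obtained directly through the discrete amalgam description and Jensen's inequality on unit cubes. One should also check carefully that \eqref{H1}, and hence the $L^\infty$ bound feeding the uniqueness step, is genuinely available for all $0<p,q<\infty$ (its proof does not use the restriction $(d-1)/d<\min\{p,q\}$). Apart from these points, the argument is a routine transcription of the classical one.
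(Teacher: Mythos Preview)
Your approach differs from the paper's. The paper's proof is a two-line appeal to a black box: the estimate \eqref{H1} gives $|u(x,t)|\le C\,t^{-d/(2\max\{p,q\})}\|u\|_{T^{p,q}(\R^{d+1}_+)}$, which is exactly the polynomial-in-$1/t$ growth hypothesis of Flett's representation theorem \cite[Theorem 17]{Fl}; that theorem directly furnishes $f\in\Ss(\Rd)'$ with $u(\cdot,t)=f*W_t$, and the boundary convergence is then the standard fact $W_t*\phi\to\phi$ in $\Ss(\Rd)$. You instead try to produce $f$ as a weak-$*$ cluster point of the slices $g_{t_0}=u(\cdot,t_0)$, mimicking the harmonic argument in Proposition~\ref{Prop:2.1}.

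Your route works when $p\ge 1$, but the equicontinuity step has a real gap for $p<1$. The ``Jensen estimate on unit cubes'' you invoke goes the wrong way: on a unit cube $Q$ with $|Q|=1$ and $0<p<1$, concavity of $t\mapsto t^p$ gives $\|g\|_{L^p(Q)}\le\|g\|_{L^1(Q)}$, not the reverse, so the uniform bound $\|g_{t_0}\|_{p,q}\le\|u\|_{T^{p,q}(\R^{d+1}_+)}$ does not control $\int_{Q_n}|g_{t_0}|$ and hence cannot yield $|\langle g_{t_0},\phi\rangle|\le C\,\rho(\phi)\,\|u\|_{T^{p,q}(\R^{d+1}_+)}$. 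In fact $(L^p,\ell^q)(\Rd)$ does not embed into $\Ss(\Rd)'$ when $p<1$: tall--thin spikes $g_N=N^{d/p}\1_{E_N}$ with $|E_N|=N^{-d}$ in a fixed unit cube satisfy $\|g_N\|_{p,q}\sim 1$ while $\int g_N=N^{d(1/p-1)}\to\infty$. The additional $L^\infty$ information you have on $g_{t_0}$ blows up like $t_0^{-d/(2\max\{p,q\})}$, so it does not rescue the uniform estimate. The paper sidesteps this entirely: the pointwise growth bound \eqref{H1} is itself the input to Flett's theorem, and no compactness in $\Ss(\Rd)'$ is needed. If you wish to keep a self-contained argument, the cleanest fix is to reproduce Flett's construction rather than extract a weak-$*$ limit of slices.
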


\begin{proof}
Assume that $u \in T^{p,q}(\R^{d+1}_+)$. According to (\ref{H1}), for every $t>0$, $$u(\cdot,t) \in L^\infty(\R^d)\subset \Ss(\R^d)'.$$
Also, by (\ref{H1}) and \cite[Theorem 17]{Fl}, there exists $f \in \Ss(\R^d)'$ such that $u(\cdot,t) = f\ast W_t$, $t>0$. It is well-known that
$$f\ast W_t \longrightarrow f
\quad \text{as } t \rightarrow 0^+,
\quad \text{in } S(R^d)'.$$
\end{proof}


\begin{proof}[Proof of Theorem \ref{Th:1.3}]
Let $f\in \Hpq$. By using Theorems \ref{Th:1.1} and \ref{Th:1.2} we get
$$
\sup_{t>0}\Big(\|f \ast W_t\|_{p,q} + \sum_{j=1}^d \|R_j(f) \ast W_t\|_{p,q}\Big) \leq C\|u^*\|_{p,q},
$$
where $u(x,t)=P_t(f)(x)$, $x\in \Rd$ and $t>0$.
Also, by \eqref{eq:R2},
$$R_j(f)\ast W_t
= R_j(f\ast W_t), \qquad t>0,
\quad j=1, \dots, d.$$
We define, for $x\in \R^d$ and $t>0$,
$$
v(x,t):=(f\ast W_t)(x)
\qquad \text{and} \qquad
v_j(x,t):=R_j(f\ast W_t)(x),
\quad j=1,\ldots,d.$$
There exists $\mu_0>1$ such that
$f\ast W_t \in (L^{\mu_0p},\ell^{\mu_0q})(\R^d)$, for every $t>0$. Then,
$G:=(v_1,\ldots,v_{d},v)\in TCR(\R^{d+1}_+)$. Hence, $G\in \HHpq$. Moreover, Proposition \ref{H3} implies that $$v(\cdot,t) \longrightarrow f
\quad \text{as} \quad t \rightarrow 0^+,
\quad \text{in } S(\R^d)'$$
and $$\|G\|_{\HHpq}\le C\|f\|_{\Hpq}.$$

Assume now $G:=(v_1,\ldots,v_d,v_{d+1}) \in \HHpq$. According to Proposition \ref{Prop:H2}, $ii)$,
$$v_{d+1}\in T^{p,q}(\R^{d+1}_+)
\qquad \text{and} \qquad
v_j(\cdot,t)=R_j(v_{d+1}(\cdot,t)), \quad t>0, \, j=1,\ldots,d.$$
By Proposition \ref{H3}, there exists $f \in S(\R^d)'$ such that
$$v_{d+1}(x,t) = (f \ast W_t)(x),
\quad x\in \R^d, \, t>0,$$
and
$$v_{d+1}(\cdot,t) \longrightarrow f,
\quad \text{as} \quad t \rightarrow 0^+,
\quad \text{in } S(\R^d)'.$$
As it was shown in the proof of Proposition \ref{Prop:H2}, $ii)$,
$$v_{d+1}(\cdot,t) \in (L^{\mu p},\ell^{\mu q})(\R^d), \quad \mu >1 \text{ and } t>0.$$

Let $t_0>0$. We define
$$u^0_{d+1}(x,t)
:= (P_t\ast v_{d+1}(\cdot,t_0))(x), \quad x\in \R^d, \, t>0.$$
By taking $\mu_0 >\max\{1,1/p,1/q\}$ we have that
$$u^0_{d+1}(\cdot,t) \in (L^{\mu p},\ell^{\mu q})(\R^d), \quad \mu > \mu_0
\text{ and } t>0,$$
and by \cite[Proposition 2,(1d)]{AF1} and the vectorial Minkowski inequality, for every $\mu \geq \mu_0$ and $t>0$,
$$
\left\|u^0_{d+1}(\cdot,t)\right\|_{\mu p,\mu q} \leq \left\| v_{d+1}(\cdot,t_0)\right\|_{\mu p,\mu q}.
$$
We define
$$u^0_j(\cdot,t)
:= \mathcal{R}_j(u^0_{d+1}(\cdot,t)), \quad t>0, \,
j=1,\ldots,d.$$
We have that
$$u^0_j(\cdot,t)
= P_t \ast \mathcal{R}_j(v_{d+1}(\cdot,t_0)),
\quad t>0, \, j=1,\ldots,d.$$
Hence, for every $j=1,\ldots,d$,
$$u_j^0(x,t) \longrightarrow \mathcal{R}_j(v_{d+1}(\cdot,t_0))(x)
\quad \text{as } t \rightarrow 0^+,
\text{ for almost all } x \in \R^d.$$
We can write, for $x\in \R^d$ and $t>0$,
$$
v_{d+1}(x,t+t_0)
=(f \ast W_{t+t_0})(x)
= \left[(f\ast W_{t_0})\ast W_t\right](x) = (v_{d+1}(\cdot,t_0)\ast W_t)(x),
$$
and, for every $j=1,\ldots,d$,
$$
v_j(x,t+t_0)
= \mathcal{R}_j(v_{d+1}(\cdot,t_0) \ast W_t)(x) = (\mathcal{R}_j(v(\cdot,t_0))\ast W_t)(x), \quad x\in \Rd, \quad t>0.
$$
According to Theorems \ref{Th:1.1} and \ref{Th:1.2}, we get
\begin{align*}
\|(u_{d+1}^0)^*\|_{p,q}
& \leq  C \sup_{t>0}\Big(\|v_{d+1}(\cdot,t_0) \ast W_t\|_{p,q} + \sum_{j=1}^d\|R_j(v_{d+1}(\cdot,t_0))\ast W_t\|_{p,q}\Big)\\
&= C\sup_{t>0}\Big(\|v_{d+1}(\cdot,t+t_0) \|_{p,q} + \sum_{j=1}^d\|v_{j}(\cdot,t+t_0)\|_{p,q}\Big)\\
&\leq C \sup_{t>0}\||G(\cdot,t)|\|_{p,q}.
\end{align*}
Note that $C$ does not depend on $t_0$.\\

Also we have that
$$
\Big\|\sup_{t>0}|v_{d+1}(\cdot,t_0) \ast W_t|\Big\|_{p,q}
\simeq C \|(u^0_{d+1})^*\|_{p,q}
\leq C \sup_{t>0}\||G(\cdot,t)|\|_{p,q}.
$$
Then,
$$
\Big\|\sup_{t>t_0} |f\ast W_t|\Big\|_{p,q} \leq C \sup_{t>0}\||G(\cdot,t)|\|_{p,q}.
$$
It is clear that
$$
\sup_{t>a}|(f\ast W_t)(x)|
\le \sup_{t>b}|f \ast W_t)(x)|,\;\; x \in \R^d, \;\;  0<b \leq a.
$$
Then,
$$ \lim_{t_0 \rightarrow 0^+}\sup_{t>t_0}|(f \ast K_t)(x)| = \sup_{t>0}|(f\ast K_t)(x)|, \quad x\in \R^d.$$
By using monotone convergence theorem we conclude that
$$
\Big\|\sup_{t>0}|f\ast W_t|\Big\|_{p,q}
\leq C \sup_{t>0}\| G(\cdot,t)|\|_{p,q}.
$$
Therefore, $f \in \HH^{p,q}(\R^d)$.\\

The vector
$$F(x,t)= \Big((R_1(f)\ast P_t)(x), \ldots (R_d(f)\ast P_t)(x),(f\ast P_t)(x)\Big), \quad x\in \R^d, \, t>0,$$
is harmonic, satisfies the generalized Cauchy-Riemann equation, and
$$
\sup_{t>0}\||F(\cdot,t)|\|_{p,q} \leq C \sup_{t>0}\||G(\cdot,t)|\|_{p,q},
$$
(see Proposition \ref{Prop:2.3}).

\end{proof}

\section{Proof of Theorem \ref{Th:1.4}}
\label{Sect:4}


Let $j=1, \dots, m$. We define
$$K_j(g)
:= \Big( \theta_j\Big( \frac{y}{|y|} \Big) \widehat{g} \Big)^{\vee}, \quad g \in L^2(\Rd).$$
It is clear that $K_j$ is bounded from $L^2(\Rd)$ into itself. According to \cite[Theorem 3.11]{ZYYW}, $L^2(\Rd) \cap \Hpq$ is dense in $\Hpq$. By proceeding as in the proof of \cite[Theorem 16.1]{Uchib} we are going to see that $K_j$ can be extended to $\Hpq$ as a bounded operator from $\Hpq$ into itself. \\

Indeed, for every $\phi\in S(\Rd)$ and $f\in S(\Rd)'$ we define the area integral $S_\phi(f)$ by
$$
S_\phi(f)(x):=\Big(\int_{\Gamma(x)}|(\phi(\cdot \, t)\widehat{f})^{\vee}(y)|^2\frac{dydt}{t^{n+1}}\Big)^{1/2},\quad x\in \Rd,
$$
where $\Gamma(x):=\{(y,t)\in \mathbb{R}^{d+1}_+:\,|y-x|<t\}$, for every $x\in \Rd$.\\

Let $\varphi \in S(\Rd)$ such that
$\1_{B(0,4) \setminus B(0,2)} \leq \varphi \leq \1_{B(0,8) \setminus B(0,1)}$
and
$g \in L^2(\Rd) \cap \Hpq$. We can write
$$S_\varphi (K_j g)(x)
= S_{\phi_j}(g)(x), \quad x \in \Rd,$$
where $\phi_j(y):=\varphi(y) \theta_j(y/|y|)$, $y \in \Rd$. Note that $\phi_j \in S(\Rd)$ and $\supp \phi_j \subseteq \{x \in \Rd : 1 \leq |x| \leq 8\}$.\\

Since $g \in L^2(\Rd)$, $K_j(g) \in L^2(\Rd)$ and, for every $\Phi \in S(\Rd)$,
$$\Phi_t * K_j(g) \longrightarrow 0, \quad t \to \infty, \quad \text{in } S(\Rd)'.$$
Also, we have that
$$\| S_\varphi (K_j g) \|_{p,q}
= \| S_{\phi_j}(g) \|_{p,q}
\leq C \|g\|_{\Hpq}.$$
Then, according to \cite[Theorem 3.17]{ZYYW}, we conclude that $K_j(g) \in \Hpq$ and
$$\| K_j g \|_{\Hpq}
\leq C \|g\|_{\Hpq}.$$

Now as in \cite[Theorem 16.2]{Uchib} we deduce that, if $g \in L^2(\Rd)$ and
$K_j(g) \in \Hpq$, $j=1, \dots, m$, then $g \in \Hpq$ and
$$ \|g\|_{\Hpq}
\leq C \sum_{j=1}^m \| K_j g \|_{\Hpq}.$$

We define, for every $r>0$, the maximal function $\MM_r$ as follows
$$\MM_r(g)(x)
:= \sup_{t>0} \Big( \int_{B(x,r)} |g(y)|^r \, dy \Big)^{1/r}, \quad x \in \Rd,$$
for every $g \in L^1_{loc}(\Rd)$. By \cite[Proposition 11.12]{L+}, $\MM_r$ is bounded from $(L^\alpha,\ell^\beta)(\Rd)$ into itself, provided that $r<\alpha,\beta<\infty$.
Then, according to \cite[Lemma 27.2]{Uchib} and \cite[Theorem 3.7]{AF1}, there exists $p_0\in (1/2,1)$ such that
$$ \| K_j g \|_{\Hpq}\le C\Big\|\mathcal{M}_{p_0}\Big(\mathcal{M}_{1/2}\Big(\sum_{\ell=1}^m|K_\ell(g)|\Big)\Big)\Big\|_{p,q}
\leq C \sum_{\ell=1}^m \| K_\ell g \|_{p,q}, \quad g \in L^2(\Rd),$$
provide that $p,q>r_0$.\\

By combining the above estimates we conclude that
$$
\|f\|_{\Hpq}
\leq C \sum_{\ell=1}^m \| K_\ell f \|_{p,q},$$
for every $f \in L^2(\Rd) \cap \Hpq$.\\

On the other hand, if $f\in L^2(\Rd)\cap \Hpq$, then 
$$\lim_{t\to 0^+}P_t(K_\ell(f))(x)=K_\ell(f)(x), \quad \text{a.e. } x \in \Rd,$$
for every $\ell=1,\ldots,m$, and by using Fatou Lemma it follows that
$$
\sum_{\ell=1}^m \| K_\ell f \|_{p,q}\le C\|f\|_{\Hpq}.
$$
Hence, $\Hpq = \Hpqt$ algebraic and topologically, where $\Theta :=\{\theta_1, \dots, \theta_m\}$.


\begin{thebibliography}{10}

\bibitem{AF1}
{\sc Z.~V. d.~P. Abl\'e and J.~Feuto}, {\em Atomic decomposition of
  {H}ardy-amalgam spaces}, J. Math. Anal. Appl., 455 (2017), pp.~1899--1936.

\bibitem{AF2}
\leavevmode\vrule height 2pt depth -1.6pt width 23pt, {\em Dual of
  {H}ardy-amalgam spaces and norms inequalities}.
\newblock Preprint 2018 \href{https://arxiv.org/abs/1803.03561}{
  arXiv:1803.03561}.

\bibitem{AF3}
\leavevmode\vrule height 2pt depth -1.6pt width 23pt, {\em Duals of
  hardy-amalgam spaces {$\mathcal{H}_{loc}^{(q,p)}$} and {P}seudo-differential
  operators}.
\newblock Preprint 2018 \href{https://arxiv.org/abs/1803.03595}{
  arXiv:1803.03595}.

\bibitem{AT}
{\sc W.~Abu-Shammala and A.~Torchinsky}, {\em The {H}ardy-{L}orentz spaces
  {$H^{p,q}(\Bbb R^n)$}}, Studia Math., 182 (2007), pp.~283--294.

\bibitem{CCYY}
{\sc J.~Cao, D.-C. Chang, D.~Yang, and S.~Yang}, {\em Riesz transform
  characterizations of {M}usielak-{O}rlicz-{H}ardy spaces}, Trans. Amer. Math.
  Soc., 368 (2016), pp.~6979--7018.

\bibitem{CW}
{\sc D.~Cruz-Uribe and L.-A.~D. Wang}, {\em Variable {H}ardy spaces}, Indiana
  Univ. Math. J., 63 (2014), pp.~447--493.

\bibitem{FS}
{\sc C.~Fefferman and E.~M. Stein}, {\em {$H^{p}$} spaces of several
  variables}, Acta Math., 129 (1972), pp.~137--193.

\bibitem{Fei}
{\sc H.~G. Feichtinger}, {\em Banach convolution algebras of {W}iener type}, in
  Functions, series, operators, {V}ol. {I}, {II} ({B}udapest, 1980), vol.~35 of
  Colloq. Math. Soc. J\'anos Bolyai, North-Holland, Amsterdam, 1983,
  pp.~509--524.

\bibitem{Fl}
{\sc T.~M. Flett}, {\em Temperatures, {B}essel potentials and {L}ipschitz
  spaces}, Proc. London Math. Soc. (3), 22 (1971), pp.~385--451.

\bibitem{FoSt}
{\sc J.~J.~F. Fournier and J.~Stewart}, {\em Amalgams of {$L^p$} and {$l^q$}},
  Bull. Amer. Math. Soc. (N.S.), 13 (1985), pp.~1--21.

\bibitem{G}
{\sc M.~Guzm\'an-Partida}, {\em Hardy spaces of conjugate temperatures}, Studia
  Math., 122 (1997), pp.~153--165.

\bibitem{GP}
{\sc M.~Guzm\'an-Partida and S.~P\'erez-Esteva}, {\em Hardy spaces of conjugate
  systems of temperatures}, Canad. J. Math., 50 (1998), pp.~605--619.

\bibitem{Heil}
{\sc C.~Heil}, {\em An introduction to weighted {W}iener amalgams}, in Wavelets
  and their applications, Allied Publishers, New Delhi, 2003, pp.~183--216.

\bibitem{Hol}
{\sc F.~Holland}, {\em Harmonic analysis on amalgams of {$L^{p}$} and
  {$1^{q}$}}, J. London Math. Soc. (2), 10 (1975), pp.~295--305.

\bibitem{Hor}
{\sc J.~Horv\'ath}, {\em Topological vector spaces and distributions. {V}ol.
  {I}}, Addison-Wesley Publishing Co., Reading, Mass.-London-Don Mills, Ont.,
  1966.

\bibitem{KNTYY}
{\sc N.~Kikuchi, E.~Nakai, N.~Tomita, K.~Yabuta, and T.~Yoneda}, {\em
  Calder\'on-{Z}ygmund operators on amalgam spaces and in the discrete case},
  J. Math. Anal. Appl., 335 (2007), pp.~198--212.

\bibitem{KS}
{\sc E.~Kochneff and Y.~Sagher}, {\em Conjugate temperatures}, J. Approx.
  Theory, 70 (1992), pp.~39--49.

\bibitem{LYJ}
{\sc Y.~Liang, D.~Yang, and R.~Jiang}, {\em Weak {M}usielak-{O}rlicz {H}ardy
  spaces and applications}, Math. Nachr., 289 (2016), pp.~634--677.

\bibitem{L+}
{\sc Y.~Liang, D.~Yang, W.~Yuan, Y.~Sawano, and T.~Ullrich}, {\em A new
  framework for generalized {B}esov-type and {T}riebel-{L}izorkin-type spaces},
  Dissertationes Math. (Rozprawy Mat.), 489 (2013), p.~114.

\bibitem{NS}
{\sc E.~Nakai and Y.~Sawano}, {\em Hardy spaces with variable exponents and
  generalized {C}ampanato spaces}, J. Funct. Anal., 262 (2012), pp.~3665--3748.

\bibitem{Nu}
{\sc S.~Nualtaranee}, {\em On least harmonic majorants in half-spaces}, Proc.
  London Math. Soc. (3), 27 (1973), pp.~243--260.

\bibitem{Rud}
{\sc W.~Rudin}, {\em Functional analysis}, International Series in Pure and
  Applied Mathematics, McGraw-Hill, Inc., New York, second~ed., 1991.

\bibitem{SHYY}
{\sc Y.~Sawano, K.-P. Ho, D.~Yang, and S.~Yang}, {\em Hardy spaces for ball
  quasi-{B}anach function spaces}, Dissertationes Math. (Rozprawy Mat.), 525
  (2017), p.~102.

\bibitem{St1}
{\sc E.~M. Stein}, {\em Harmonic analysis: real-variable methods,
  orthogonality, and oscillatory integrals}, vol.~43 of Princeton Mathematical
  Series, Princeton University Press, Princeton, NJ, 1993.

\bibitem{StWe2}
{\sc E.~M. Stein and G.~Weiss}, {\em On the theory of harmonic functions of
  several variables. {I}. {T}he theory of {$H^{p}$}-spaces}, Acta Math., 103
  (1960), pp.~25--62.

\bibitem{StWe}
\leavevmode\vrule height 2pt depth -1.6pt width 23pt, {\em Introduction to
  {F}ourier analysis on {E}uclidean spaces}, Princeton University Press,
  Princeton, N.J., 1971.
\newblock Princeton Mathematical Series, No. 32.

\bibitem{Uchi}
{\sc A.~Uchiyama}, {\em The {F}efferman-{S}tein decomposition of smooth
  functions and its application to {$H^{p}({\bf R}^{n})$}}, Pacific J. Math.,
  115 (1984), pp.~217--255.

\bibitem{Uchib}
\leavevmode\vrule height 2pt depth -1.6pt width 23pt, {\em Hardy spaces on the
  {E}uclidean space}, Springer Monographs in Mathematics, Springer-Verlag,
  Tokyo, 2001.

\bibitem{W1}
{\sc N.~Wiener}, {\em On the representation of functions by trigonometrical
  integrals}, Math. Z., 24 (1926), pp.~575--616.

\bibitem{W2}
\leavevmode\vrule height 2pt depth -1.6pt width 23pt, {\em Tauberian theorems},
  Ann. of Math. (2), 33 (1932), pp.~1--100.

\bibitem{WZZJ}
{\sc L.~Wu, D.~Zhou, and Y.~Jiao}, {\em Riesz transform characterizations of
  variable {H}ardy-{L}orentz spaces}.
\newblock To appear in Rev. Mat. Complutense.

\bibitem{YLK}
{\sc D.~Yang, Y.~Liang, and L.~D. Ky}, {\em Real-variable theory of
  {M}usielak-{O}rlicz {H}ardy spaces}, vol.~2182 of Lecture Notes in
  Mathematics, Springer, Cham, 2017.

\bibitem{YZN}
{\sc D.~Yang, C.~Zhuo, and E.~Nakai}, {\em Characterizations of variable
  exponent {H}ardy spaces via {R}iesz transforms}, Rev. Mat. Complut., 29
  (2016), pp.~245--270.

\bibitem{ZYYW}
{\sc Y.~Zhang, D.~Yang, W.~Yuan, and S.~Wang}, {\em Real variable
  characterizations of {O}rlicz slice {H}ardy spaces}.
\newblock Preprint 2018 \href{https://arxiv.org/abs/1803.09216}{
  arXiv:1803.09216}.

\end{thebibliography}

\end{document}